\newtheorem{theorem}{Theorem}[section]
\newtheorem{lemma}{Lemma}[section]
\newcommand{\R}{\mathbb{R}}
\numberwithin{equation}{section}
\title{Non-normal limiting distribution for optimal alignment scores of strings in binary alphabets}
\date{}
\author{Jun Tao Duan} 
\address{School of Mathematics\\
  Georgia Institute of Technology\\
  Atlanta, GA, 30332}
\email[Jun Tao Duan]{jt.duan@gatech.edu}
\author{Heinrich Matzinger} 
\address{School of Mathematics\\
  Georgia Institute of Technology\\
  Atlanta, GA, 30332}
\email[Heinrich Matzinger]{matzi@math.gatech.edu}
\author{Ionel Popescu} 
\address{School of Mathematics\\
  Georgia Institute of Technology\\
  Atlanta, GA, 30332 \\
  Institute of Mathematics of Romanian Academy\\ 
21 Calea Grivitei Street, 010702 Bucharest, Romania\\
Facultatea de Matematica si Informatica - Universitatea din Bucuresti \\
Str. Academiei nr.14, sector 1, C.P. 010014, Bucuresti, Romania}
\email[Ionel Popescu]{ipopescu@math.gatech.edu}
\begin{document}

\maketitle

\begin{abstract}
We consider two independent binary i.i.d. 
random strings $X$ and $Y$ of equal length
$n$ and the optimal alignments according to 
a symmetric scoring functions only. We decompose the
space of scoring functions
into five components. Two of these components add a part to the optimal
score which does not depend on the alignment and which is asymptotically normal.

We show that when we restrict the number of gaps sufficiently
and add them only into one sequence,
then the alignment score can be decomposed into a part which is normal and 
has order $O(\sqrt{n})$ and a part 
which is on a smaller order and tends to a Tracy-Widom distribution.
Adding gaps only into one sequence is equivalent to
aligning a string with its descendants in case of mutations and deletes.
For testing relatedness of strings, the normal part is irrelevant,
since it does not depend on the alignment hence it can be safely removed from the test statistic.

\end{abstract}

\section{Introduction}
 Optimal Alignments are
 among the main two components used by most DNA-alignment
algorithms which try to find similar substrings in DNA-sequences. 
Because of
its great   practical importance, these problems have received 
a tremendous amount of attention. 

Let us explain the key points from the DNA perspective. DNA sequencing is one of the most important areas of study in biology.  This is still rather expensive and very complex.  Suppose now that in a certain animal population, a certain gene is detected.  This gene is probably likely
to appear in the DNA of other animals too, but in a slightly different
form.  Thus,  instead of repeating the same costly experiments on other animals
to find the gene, they just search for a DNA-segment which looks similar
to the gene in the first population.  This is why often  DNA-strings in different animals
are compared in order to find places that are related.   For more details on the analysis of DNA we refer the reader to \cite{durbin1998biological} and in particular the iid assumption which appears also in this paper is explained on page 14 of the \cite{durbin1998biological}.  

{\footnotesize Here is a more concrete example.  Assume $Z=AATCUA$ is a hypothetical DNA of an ancestor with two descendants, one with the DNA string $X=AACA$ (which lost
the $T$ and the $G$) and another with the DNA equal to $Y=ATTCUA$ (which had an 
$A$ mutated into a $T$).  We align these strings (with gaps), as follows:
\[
\begin{array}{c|c|c|c|c|c|c|c}
Z& &A&A&T&C&U&A\\\hline
X& &A&A& &C& &A\\\hline
Y& &A&T&T&C&U&A
\end{array}
\]

In the analysis of the DNA one has access to the DNA of the descendants only.  In terms of the alignments, one is interested in the {\it historic alignment}. In our example,
we would have that the alignment $\pi$ is given by:
\begin{equation}
\label{alignment}
\begin{array}{c|c|c|c|c|c|c|c}
X& &A&A& &C& &A\\\hline
Y& &A&T&T&C&U&A
\end{array}
\end{equation}
and we call this the historic alignment.   The letters which disappeared in one descendant are now represented by gaps.  Which alignment with gaps corresponds to the evolution history of the 
strings is not known apriori. In other words, in our example, we know the strings $x=AACA$ and $Y=ATTCUA$, but not ``the true historic'' alignment given here by \eqref{alignment}.  

To guess which alignment is historic, the natural attempt is to find
an alignment which maximizes similarity. For this, one starts with a scoring function $S:\mathcal{A}\times\mathcal{A}\rightarrow \mathbb{R}$
which measures similarity between letters. (Here  $\mathcal{A}$ is the alphabet augmented by one symbol for the gap).  The {\it total score} of an alignment is then the sum of the scores
of the aligned symbol pairs. In alignment \eqref{alignment}, the alignment
score $S_\pi(x,y)$ is equal to 
\[
S_\pi(x,y)=S(A,A)+S(A,T)+S(G,T)+S(C,C)+S(G,U)+S(A,A)
\]
where $G$ stands for a gap.  An alignment $\nu$, which for given strings $x$ and $y$
maximizes the alignment score $S_\nu(x,y)$ is called {\it  an optimal alignment}.
The score of an optimal alignment is called \emph{optimal alignment score} and we will denote it by
\[
S(x,y)=\max_\pi S_\pi(x,y),
\]
where the maximum above is taken over all alignments with gaps
of $x$ and $y$.

Which alignment is optimal depends ultimately on the scoring function used.
Often, the scoring function biologists work with is a log likelihood.
For this, one assumes models of letters evolving independently of their neighboring letters. 
One takes as scoring function $S(A,B)$, the logarithm of the probability
that an ancestor letter evolves into an $A$ in one descendant and into
a $B$ in the other descendant. Here $A$ and $B$ are any two letters from
the alphabet under consideration, or also the gap symbol.
With this choice of scoring function, the optimal alignment
becomes the maximum-likelihood estimate for the historic alignment.
}

A high optimal alignment score indicates relatedness of the two DNA-strings.
Thus in testing for relatedness of DNA-strings one usually takes as test statistics
 the optimal alignment score.  The significance of such a test depends then on the order of magnitude of  the fluctuation of the optimal alignment score.

It has been a long open question whether for two independent i.i.d. strings
of length $n$,  the fluctuation of the optimal alignment score is of order 
$\sqrt{n}$ or $\sqrt[3]{n}$. (See \cite{Waterman-estimation}  and \cite{Sankoff1}).
In this paper, we think we can see why these two different conjectures
co-exist at the same time.   We show that for two independent
i.i.d strings of length $n$ the optimal alignment score (depending
on the parameters) can contain  a  normal component with fluctuation of order
$\sqrt{n}$. This component is not relevant for finding the optimal
alignment score, because it does not depend on the alignment.

\emph{  When testing for relatedness of DNA-strings one should remove this normal
component, since it is only noise. This can be done by
modifying the scoring function.    Once we ``cut
out this normal component'',  the question that remain is what is the asymptotic distribution
of the remainder?   Is it Tracy-Widom?  We can't prove it in full generality, however we check this for the special case where the number of gaps is a small power of the length of the strings and the gaps are allowed only into one string.}

\emph{ We also exhibit a scoring function with an alignment score which is of
the order $\sqrt{n}$ but not normal, nor Tracy-Widom.  This,
should then show that any claim that the  order $VAR[L_n(S)]=O(n)$
automatically implies asymptotic normality is wrong!
(Here $L_n(S)$ designates the optimal alignment score
of two i.i.d. random strings of length $n$.)
To prove normality, one  needs a more detailed knowledge of
the path structure of the optimal alignments! }

The optimal alignment can be viewed as a special case of Last Passage Percolation
with correlations.  To our knowledge, many of  the proofs for models of First and Last Passage Percolation,  where the asymptotic distribution is known, go back to proving
asymptotic equivalence with  the following functional
of Brownian motions:
Consider $B^1,B^2,\ldots$ a sequence of i.i.d. standard Brownian motions.  Also consider when $k$ goes to infinity, the functional
\begin{equation}
\label{shit}
\max_{0\leq t_1<t_2<\ldots<t_k\leq 1} [B^1(t_1)-B^1(0)]+[B^2(t_2)-B^2(t_1)]+\ldots+[B^k(t_k)-B^k(t_{k-1}))]
\end{equation}
which properly rescaled becomes Tracy-Widom \cite{MR1818248,MR1830441,MR1887169}.
Now, in our case, we have a similar situation, where instead we have a sequence of \emph{dependent} identically distributed
 standard Brownian motions  $W^1,W^2,\ldots$.  However,  the correlation structure is rather simple.  Precisely,  there exists a Brownian motion $V(t)$ and i.i.d Brownian motions $B^1,B^2,\ldots$ so that
\[
W^i(t)=B^i(t)+V(t),
\]
for all $i=1,2,\ldots$.  Hence, instead of independent Brownian motions we have correlated ones and the maximum given in \eqref{shit} can be written as
\begin{align*}
\label{shit2}
&\max_{0\leq t_1<t_2<\ldots<t_k\leq 1} [W^1(t_1)-W^1(0)]+[W^2(t_2)-W^2(t_1)]+\ldots+[W^k(t_k)-W^k(t_{k-1}))]=\\&=
V(1)-V(0)+\max_{0\leq t_1<t_2<\ldots<t_k\leq 1} [B^1(t_1)-B^1(0)]+[B^2(t_2)-B^2(t_1)]+\ldots+[B^k(t_k)-B^k(t_{k-1}))].
\end{align*}
Hence, we get a normal component plus a term on a smaller scale which is indeed Tracy-Widom. This is the reason why we can prove in the current  paper,  under a very special
situation (adding gaps only into one string and keeping their number a 
small power of $n$), that in the limit we obtain a normal plus Tracy-Widom on
a smaller scale.  However, the normal part is often irrelevant for
the optimal alignment problem.

Note here that the normal component $V(1)-V(0)$ does not dependent on the choice of 
$t_1<t_2<\ldots<t_k\leq1$ (which in what follows, correspond to alignments).  Again, the normal component is in some sense irrelevant for finding the optimal alignment. We should leave it out because it is only added noise.   

\subsection{History of this problem}
We consider two random i.i.d. strings $X=X_1X_2\ldots X_n$
and $Y=Y_1\ldots Y_n$.  In the sequel,  $L_n(S)$ denotes the optimal alignment score
of $X$ and $Y$. 

Chvatal-Sankoff proved  \cite{Sankoff1} that $E[L_n(S)]/n$ converges to a constant $\gamma_S$,
but $\gamma_S$ is not known in most cases.  In \cite{martinezlcs},
\cite{lcscurve} we have good confidence intervals for $\gamma$. We have been able to prove the 
order
\[
VAR[L_n(S)]=O(n)
\]
in many situations \cite{VARTheta},\cite{bonettolcs}, \cite{montecarlo},
\cite{increasinglcs}. This proves the conjecture of Waterman 
\cite{Waterman-estimation} 
(for certain specific distributions of the letters and  scoring 
functions),  but is different from what Chavatal-Sankoff have conjectured.
The order conjectured by Chavatal-Sankoff is the order
which one would expect knowing that $L_n$ is also  
the  weight of the heaviest path from $(0,0)$ to $(n,n)$ in an LPP-formulation
of our problem.
(In the physics literature, \cite{spohn91}, the order for the standard deviation
of the heaviest path from $(0,0)$ to $(n,n)$ in a vast class of 
LPP-models is conjectured to be $n^{1/3}$ not $n^{1/2}$.  For more details see also  \cite{KPZ86}).

Let us explain what LPP is and how our optimal alignment can be formulated
as an LPP-problem with correlated weights.  In LPP, one considers an oriented graph $(E,V)$
with a random weight function $w:E\rightarrow \mathbb{R}^+$ of the edges. 
Let $x,y\in V$. A path from $x$ to $y$ in the graph
$(E,V)$ is then defined as a sequence
of vertices: $x=x_0,x_1,x_2,\ldots,x_i=y$
where $x_j\in V$ and $(x_{j-1},x_j)\in E$ for all $j=2,3,\ldots,i$.
The  total weight of the  path $x=x_0,x_1,x_2,\ldots,x_i=y$
is given by:
$$w(\;(x_0,x_1,\ldots,x_i)\;):=\sum_{j=2}^i w(\;(x_{j-1},x_j)\;).$$
an optimal path from $x$ to $y$ is then a path
 which maximizes the
total weight among all paths from $x$ to $y$.\\

Now, our  optimal alignment problem can be formulated as an LPP on the integer lattice with the optimal alignment score $L_n(S)$ being the weight of the path of maximal weight from $(0,0)$ to $(n,n)$. For this we take the set of vertices $V$ to be $\mathbb{N}\times\mathbb{N}$
and the edges to go always one to the right, one up or diagonally
up to the next vertex. The weight for horizontal and vertical
edges is minus the gap penalty.  For the edge $((i-1,j-1),(i,j))$
the weight is $S(X_i,Y_j)$. With this setting when we align a letter
with a gap, this corresponds to moving one unit vertically or horizontally.
Aligning $X_i$ with $Y_j$  corresponds to moving along the edge
 $((i-1),(j-1)),(i,j))$. The optimal path then defines an optimal alignment where
for every edge $((i-1,j-1),(i,j))$ contained in the optimal path, aligns $X_i$ with $Y_j$.
The transversal fluctuation measures how far the optimal path from $(0,0)$ to $(n,n)$ deviates from the diagonal.\\
{\footnotesize
Let us give an example. Take the two strings $X=heini$ and $Y=henri$.
Consider the following alignment with gaps:
\begin{equation}
\label{alignmenthenri}
\begin{array}{c|c|c|c|c|c}
h&e&i&n& &i\\\hline
h&e& &n&r&i
\end{array}
\end{equation}
We can represent this alignment as a path
on the integer points in two dimension. We are always allowed to go
to the left by one unit or up by one unit or diagonal to the next integer point.
When we go horizontal or vertical, this means that we 
align a letter with a gap. When we go diagonal this means that
we align two letters with each other. Below, we show how we represent
the alignment given in \eqref{alignmenthenri} by a path on the two 
dimensional integer lattice: 
$$
\begin{array}{c||c|c|c|c|c}
i& & & & &6\\\hline
n& & &4&5& \\\hline
i& &3& & & \\\hline
e& &2& & & \\\hline
h&1& & & & \\\hline\hline
0&h&e&n&r&i
\end{array}
$$
To visualize, the path represented by the last table above, draw short
segments between the points: $0$ to $1$, then $1$ to $2$,
....and up to a segment from $5$ to $6$. The path is then given 
by this sequence of segments. 
So, the first such segment from $0$ to $1$ is diagonal, and corresponds
to aligning the two $h$'s with each other. The score-contribution
for this is $S(h,h)$. Then, the segment from $1$ to $2$ is diagonal
and corresponds to aligning the $e$'s. Its score contribution
is $S(e,e)$. The segment from $2$ to $3$ is vertical and hence corresponds
to aligning a letter with a gap. In the current case, it is the first
$i$ in $heini$ which gets aligned with a gap, and the score
for this is the gap penalty $\delta<0$. Then $34$ is again diagonal
and corresponds to aligning $n$ with $n$, with score $S(n,n)$.
Then, the segment $45$ is horizontal and corresponds to aligning
$r$ in $henri$ with a gap, which contributes to
minus a gap penalty to the total score.  The final segment $56$ is diagonal,
and corresponds to aligning the last $i$ of $henri$ with the last $i$
of $heini$. Score is then $S(i,i)$.
In other words, the horizontal or vertical moves contribute minus a gap 
penalty each, whilst the diagonal moves contribute to the score by aligning
the corresponding letters.  IN THIS EXAMPLE, WE ALIGNED ONLY IDENTICAL
LETTERS. This is not always the case. In DNA-analysis, when letters can mutate
we also align similar letters with each other, not just identical.

Formally we can view an alignment with gaps of the strings $x=x_1x_2\ldots x_n$
and $y=y_1y_2\ldots y_n$ as a couple consisting of two
increasing integer sequences. The sequence $\nu$
\[
1\leq \nu_1<\nu_2<\ldots<\nu_i\leq n
\]
 and the sequence $\mu$
\[
1\leq \mu_1<\mu_2<\ldots<\mu_i\leq n,
\]
are non-negative integers.  Here $i\leq n$ is any integer non-negative number. The alignment with gaps
defined by $(\mu,\nu)$ is then the alignment which aligns
$x_{\nu_j}$ with $y_{\nu_j}$ for all $j=1,2,\ldots,i$ and aligns all remaining
 letters 
with a gap. The alignment score
is then 
$$\sum_{j=1}^i S(x_{\nu_j},y_{\mu_j})-2(n-i)\cdot \delta,$$
 where is $2(n-i)\delta$ represents the total gap penalty.
(The penalty for one given gap being $-\delta$).
In our current example, we have the number of aligned letter pairs
is $j=4$. Furthermore, we find:
 $\nu_1=1,\nu_2=2,\nu_3=4,\nu_4=5$, whilst $\mu_1=1,\mu_2=2,\mu_3=3,\nu_4=5$.
The path representing the alignment given by $(\nu,\mu)$ is the path 
containing all the segments:
$$
((\nu_j-1,\mu_j-1),(\nu_j,\mu_j))$$
for $j=1,2,\ldots,i$
and additionally a minimum amount of vertical and horizontal
unit length integer segments so as to make this a connected path.
}\\[3mm]

First Passage Percolation (FPP) and LPP are part of a vast area of 
statistical physics \cite{spohn91} which is concerned with random growth models
for which physicists expect some universality properties.
More specifically, one considers growth of a cluster where
material is being attached randomly on the surface of a nucleus. 
There are many fundamental questions open for decades, such as the universality of the 
fluctuation exponents.  Many theoretical physicists, 
crystallographers and probabilists have worked on this and many conjectures have been formulated \cite{spohn91}, as for example that the fluctuation should behave like $n^{1/3}$ and the transversal fluctuation should be or order $n^{2/3}$ (see KPZ-conjecture in \cite{KPZ86}).  These are difficult conjectures and rigorous results were obtained only under some special circumstances of LPP models, like Longest Increasing Subsequence \cite{BaikDeiftJohansson99}, 
LPP  on  $\mathbb{Z}^2$ with exponential or geometric waiting times 
\cite{Johansson2000} and the case of percolation in a narrow tube 
\cite{chatterjee2012}.  To our knowledge, the best results show only
a fluctuations of less than $n/\ln(n)$ \cite{benjamini2003} and above $\ln(n)$
\cite{peres1994}.

We have been able to prove our results for several  Longest Common
Subsequence (LCS) and Optimal Alignments (OA) models.\\
{\footnotesize LCS are a special case of OA. For two strings
$x=x_1x_2\ldots x_n$ and $y=y_1y_2\ldots Y_n$ a common subsequence
of $x$ and $y$ is a sequence which is a subsequence of $x$ and at the same
time a subsequence of $y$. A LCS of $x$ and $y$ is a common subsequence
of $x$ and $y$ of maximum length. The length of the LCS of $x$ and $y$
can be viewed as an optimal alignment score of $x$ and $y$.
For this we take $S(a,b)$ to be $1$ if $a=b$ and $S(a,b)=$ otherwise.
On top the gap penalty should be taken $0$. With this choice,
the length of the LCS is also equal to the optimal alignment score}.\\[3mm]
Our results together
with the results of Baik, Deift and Johansson \cite{BaikDeiftJohansson99},
are among the few  rigorous results determining the exact order of the fluctuation exponents for some non-trivial FPP or LPP related 2-D models.  In their work, as in ours, the fluctuation is of order $n^{1/3}$ and the rescaled asymptotic limiting law is Tracy-Widom. Another interesting paper is for the TASEP
model,
by Borodin, Ferrari, Praehofer and Sasamoto \cite{TASEP}. There they do not use
the same approach of reducing the problem to formula \eqref{I} and \eqref{II},
and it is not  the GUE which appears in the limit, but GOE. Other methods used
to establish the solvability of LPP models are the vertex operator and fock space formalism put foward by Okounkov \cite{okounkov} in the study of Schur process.  

Our paper is organized as follows.  In Section~\ref{s:2}, we introduce the main decomposition of the scoring function and show examples where the limiting distribution of the optimal alignment score  is normal alongside with other examples which are non-normal.   In the next section, namely Section~\ref{s:3}, we restrict the number of gaps to a certain number $k$ which is finite and fixed and allow the gaps only in one sequence.   Here we give a general description of the optimal alignment in terms of multiple dependent random walks and outline an argument which supports the conjecture that under some conditions, if $k$ is replaced by $n^{\alpha}$ with small $\alpha$, then the (rescaled) limiting distribution of the optimal alignment is Tracy-Widom.   We can not prove this statement rigorously in general, but in the next section, namely Section~\ref{s:4}, we prove this for the key scoring function $S(x,y)=xy$ under the assumption that the number of gaps is $k=n^{\alpha}$  for small $\alpha$ and the second sequence of letters is distributed normally rather than $\pm1$.   The main rigorous result is stated in Theorem~\ref{t:1} and appears in Section~\ref{s:4}.     

Section~\ref{s:5} is dedicated to technical results while the last one, Section~\ref{s:6} contains a few simulations.

\section{Linearly decomposing the space of scoring functions}\label{s:2}

In this section we discuss the general decomposition of the scoring function in a natural basis.  The point of this is that some of the scoring functions are in some sense artificial because the optimal score does not depend on the alignment.  This will give a normal behavior in the limit for the optimal score.   

We consider symmetric scoring for a binary alphabet.
We do not allow alignments of gaps with gaps, so
the scoring functions we consider are given by
a $3\times 3$ matrix, with one irrelevant entry, namely 
$$S:=
\left(
\begin{array}{ccc}
S(a,a)&S(a,b)&S(a,G)\\
S(b,a)&S(b,b)&S(b,G)\\
S(G,a)&S(G,b)&S(G,G)
\end{array}
\right).$$
Here $G$ stands for a gap. In our case we will put
$S(G,G)=0$ and we will always assume that the above
matrix is symmetric hence $S(a,b)=S(b,a)$
and $S(a,G)=S(G,a)$, whilst $S(b,G)=S(G,b)$.
The linear space of such $3\times 3$ symmetric matrices
is generated by the following five elements

$$S_0:=\left(
\begin{array}{rrr}
1&1&1/2\\
1&1&1/2\\
1/2&1/2&0
\end{array}  \right),\,S_1=\left(
\begin{array}{rrr}
1&0&1/2\\
0&-1&-1/2\\
1/2&-1/2&0
\end{array}  \right),\,S_2=
\left(
\begin{array}{rrr}
1&-1&0\\
-1&1&0\\
0&0&0
\end{array}  \right)
$$

and
$$S_3=
\left(
\begin{array}{rrr}
0&0&1\\
0&0&-1\\
1&-1& 0
\end{array}  \right),\,
S_4=\left(
\begin{array}{rrr}
0&0&1\\
0&0&1\\
1&1&0
\end{array}  \right).
$$
Note that the matrices $S_0$, $S_1$ and $S_2$ are orthogonal to each other (with respect to the canonical inner product given by $\langle A,B \rangle=Tr(AB)$). Also, the matrices $S_3$ and $S_4$ concern  only with the gap penalty. Thus, any scoring function we will consider
can be written as
\[
S=a_0S_0+a_1S_1+a_2S_2+a_3S_3+a_4S_4.
\]

First, it is  important to note that the part $a_0S_0+a_1S_1$
gives the same alignment score for any alignment.
This is so, because $S_0$ and $S_1$ can both be viewed as linear additive
functions in the following way.   For a given a function 
$h:\{a,b,G\}\rightarrow \mathbb{R}$, take the ``linear ``scoring function $T$ defined by:
\begin{equation}
\label{h}
T(x,y):=h(x)+h(y)
\end{equation}
for all $x,y\in\{a,b,G\}$.  

We also assume that $h(G)=0$. Clearly for such a ``linear'' scoring function,
with value $h(G)=0$, the alignment score for any alignment $\pi$
does not depend $\pi$.  Indeed, it is trivial to check that 
\[
T_\pi(X_1\ldots X_n,Y_1\ldots Y_n)=\sum_{i=1}^n h(X_i)+\sum_{i=1}^n h(Y_i).
\]
Furthermore, the expression on the right side of the above equality
is a linear function of a binomial random variable.
Now, in our case, $S_1$ is of the above type with the function 
$h$ given by 
$$h(a)=1/2,h(b)=-1/2,h(G)=0.$$
Thus, we find that
\[
L_n(S_1)=(N_n^a-N_n^b)/2
\]
where $N_n^a$ denotes the total number of $a$'s in both strings
$X$ and $Y$ combined whilst $N_n^b$ denotes the number
of $b$'s in both strings combined. Noting that $N_n^a+N_n^b=2n$,
we finally find that
\[
L_n(S_1)=N_n^a-n.  
\]
Here $N_n^a$ is a binomial with parameter $2n$ and $p_{a}=P(X_1=a)$, consequently,  
\[
VAR[L_n(S_1)]=VAR[N_n^a]=2np_{a}(1-p_{a}).
\]

Similarly, the alignment score according to $S_0$ does not depend
on the alignment and is also equal to $n$.  Since the alignment scores according to $S_0$ and $S_1$ do not depend on  the alignment we find for any
scoring function $S=a_0S_0+a_1S_1+a_2S_2+a_3S_3+a_4S_4$
that  the optimal alignment score can be decomposed as follows: 
\begin{align*}
&L_n(S)=L_n(a_0S_0+a_1S_1+a_2S_2+a_3S_3+a_4S_4)=\\
&=a_0L_n(S_0)+a_1L_n(S_1)+L_n(a_2S_2+a_3S_3+a_4S_4)=\\
&(a_0-a_1)n+a_1\cdot N_n^a+L_n(a_2S_2+a_3S_3+a_4S_4).
\end{align*}

According to the central limit theorem, $N_n^a$
is asymptotically normal on the scale $\sqrt{n}$ and thus we can conclude that 
\[
\frac{N_n^a-2np_a}{\sqrt{2n p_a (1-p_a)}}
\approx \mathcal{N}(0,1),
\]
where $p_a=P(X_1=a)$.

Now, since the part $L_n(a_0S_0+a_1S_1)$ does not depend
on the alignment it has no affect on which alignment is optimal.
Also, it is just noise added for testing if sequences are related
or not.   This noise can be removed, since it is known.  Equivalently,  we can just take $a_2S_2+a_3S_3+a_4S_4$
as scoring function instead of $S=a_0S_0+a_1S_1+a_2S_2+a_3S_3+a_4S_4$.

\subsection{Normal and non-normal cases}

In this section we present a simple example for which the optimal alignment score, properly rescaled, converges to a distribution which is not normal.  Interestingly, the scaling is still the same as in the case of the previous section, namely, $\sqrt{n}$.  

As mentioned, for scoring functions of the type $S=a_0S_0+a_1S_1$,
the alignment score does not depend on the alignment $\pi$
of the strings $X_1\ldots X_n$ and $Y_1\ldots Y_n$.
Furthermore, the limiting distribution of the optimal
alignment score is normal on the scale $\sqrt{n}$
since

\[ 
L_n(a_0S_0+a_1S_1)=a_0 n+a_1(N^a_n-n)
\]
where $N^a_n$ is the total number of $a$'s in the 
concatenated string
$X_1\ldots X_nY_1Y_2\ldots Y_n$.   Since $N^a_n$ is binomial with parameter $2n$ and $p_a=p(X_1=a)$,  the above optimal score, properly rescaled converges to the normal.  
Thus we can interpret the scoring function
with $a_2=a_3=a_4=0$ as degenerate and hence not relevant
for practical purposes.

Next we consider the following scoring function
\[
S=\left(
\begin{array}{rrr}
1&0&0\\
0&-1&0\\
0&0&0
\end{array}  \right)=S_1- S_3/2.
\]
If $N_a^X$ designate the number of $a$'s in $X=X_1X_2\ldots X_n$
and $N_a^Y$ designate the number of $a$'s in $Y=Y_1\ldots Y_n$, 
then, the optimal alignment score according to the scoring function
$S$ is simply equal to the minimum of $N_a^X$ and $N^Y_a$, namely,
\[
L_n(S)=\min\{N_a^X,N_a^Y\}.
\]
This is so because aligning a $b$ with a $b$ gives a negative
score, whilst aligning something with a gap gives $0$.
So, the optimal alignment is going to align all $b$'s with gaps.
Since an $a$ aligned with an $a$ scores positively, the 
optimal alignment is going to align a maximum number of $a$'s
with each other.\\
Note that $N_a^X$ and $N_a^Y$ have the same expectation
and both tend asymptotically to be normal when re-scaled
by $\sqrt{np(1-p)}$.
Hence, we get
$$\frac{L_n(S)-E[N_a^X]}{\sqrt{np(1-p)}}=
\min\left\{\frac{N_a^X-E[N_a^X]}{\sqrt{np(1-p)}},
\frac{N_a^Y-E[N_a^Y]}{\sqrt{np(1-p)}}\right\}
\approx\min\{\mathcal{N}_1,\mathcal{N}_2 \}
$$
where $\mathcal{N}_1$ and $\mathcal{N}_2$
are two standard normal independent of each other, which is certainly not normal.   Thus,  we have a non-normal limiting distribution, but still we have a fluctuation on a scale
$\sqrt{n}$! 

Notice also the following interesting fact, namely that there are many different alignments, which are very far away from each other.  Indeed, 
say that $N_a^X>N_a^Y$. Then, the optimal alignments can be characterized  
as follows.  Choose a subset of $a$'s in $X$
with cardinality equal  to $N_a^X-N_a^Y$. These $a$'s are aligned
with gaps.  Then align all the remaining $a$'s in $X$ with $a$'s from $Y$.
Finally, align all the $b$'s with gaps. 


Now, real life scoring functions should not have $a_4$ too small,
since in practical situations, like genetic evolution, there are usually
not too many gaps.   This reflects the fact that there are not too many 
deletes from an ancestors DNA.   In the next section we show that if we look for the optimal alignment under the constrain of the total number of gaps being small,
then we get that $L_n(a_2S_2+a_3S_3+a_4S_4)$ can be decomposed
into a normal part on the scale $\sqrt{n}$ plus a smaller part whose limiting
distribution is non-normal.

\section{Fixing the number of gaps and allowing them only in one
sequence}\label{s:3}

In this section, under the assumptions that the number of gaps is finite and fixed, we present a general outline of an argument which should be sufficiently convincing that the limiting behavior of the optimal alignment is normal in the scale $\sqrt{n}$ plus a Tracy-Widom on a smaller scale.   

Let us assume that we only consider alignments with a fixed number of gaps
equal to $k$, where $k$ is a fixed constant. To simplify the problem
we will allow the gaps only in one sequence.

Let $k$ be a constant and let $X=X_1X_2\ldots X_{n-k}$
and $Y=Y_1Y_2\ldots Y_n$ be the two random strings of letters. Thus, we will align exactly $k$ letters  from $Y$ with gaps, but all the letters of $X$ will
be aligned with letters of $Y$. We will consider the optimal alignment
of $X$ and $Y$ under the constrain that there are exactly $k$ gaps all inserted in $X_{1},\dots X_{n-k}$. The corresponding optimal score
will be denoted by $L_n^k(S)$. To specify an alignment we simply
need to specify which $k$ letters of $Y$ get aligned with gaps, thus we need to specify a sequence of $k$ increasing integers
$0<c_1<c_2<\ldots<c_k\leq n$. The vectors consisting of these integers
then define our alignment, namely, 
align $Y_{c_i}$ with a gap for all $i=1,2,3,\ldots,k$.

Let $R^i$ denote the random walk defined recurrently on $j$ by
\begin{align*}
&R^i(0)=0\\
&R^i(1)=S(X_{1-i+1},Y_1)
\end{align*}
and then:
$$R^i(j)-R^i(j-1)=S(X_{j-i+1},Y_j)
$$
for all $i=1,2,\ldots,k+1$ and for $j\in[1,n-1]$. 
(To simplify the notation we assume that the sequence of the $X_i$'s
is a double infinite sequence of i.i.d. variables 
$$\ldots,X_{-2},X_{-1},X_0,X_{1},X_{2},\ldots.$$
In the problem which we will consider only a finite number
of $X_i$'s will have $i\leq 0$ and their contribution
will be negligible).

{\footnotesize For example, the first random walk $R^1$ is then defined
by
\begin{align*}
&R^1(0)=0,\\&R^1(1)=S(X_1,Y_1),\\&R^1(2)=S(X_1,Y_1)+S(X_2,Y_2),\\&
R^1(3)=S(X_1,Y_1)+S(X_2,Y_2)+S(X_3,Y_3),\\&\ldots
\end{align*}
Similary, the second random walk $R^2$ is defined as follows:
\begin{align*}
&R^2(0)=0,\\&R^2(1)=S(X_0,Y_1),\\&R^1(2)=S(X_0,Y_1)+S(X_1,Y_2),\\&
R^2(3)=S(X_0,Y_1)+S(X_1,Y_2)+S(X_2,Y_3),\\&\ldots
\end{align*}}\\[3mm]
Consider the  alignment score under the constrain that 
the $l$-th gap is aligned with $Y_{c_l}$, for all $l=1,2,\ldots,k$, where
$0<c_1<\ldots<c_k\leq n$ is a given increasing sequence of integers.
The alignment with exactly $k$ gaps is uniquely defined
by this constrain and its alignment score is:
$$ (R^1(c_1-1)-R^1(0))+(R^2(c_2-1)-R^2(c_1))+\ldots +(R^{k+1}(n)-R^{k+1}(c_k)).
$$
Recall that we assume a zero gap penalty.

{\footnotesize Let us give an example.
Take the string $Y=Y_1Y_2Y_3Y_4Y_5Y_6Y_7Y_8$ and $X=X_1X_2X_3X_4X_5X_6$
we could align $Y_3$ and $Y_{6}$ with a gap. Then we have $k=2$ gaps
and $c_1=3$ whilst $c_2=6$. So, this defines the  $2$-gap-alignment $\pi$
given by:
$$
\begin{array}{c|c|c|c|c|c|c|c}
X_1&X_2&   &X_3&X_4&  &X_5&X_6\\\hline
Y_1&Y_2&Y_3&Y_4&Y_5&Y_6&Y_7&Y_8
\end{array}
$$
Hence,  the alignment score with a $0$-gap penalty is equal to:
\begin{align*}
\tt{Alignment \; score\;of\;} \pi=&S(X_1,Y_1)+S(X_2,Y_2)+\\
&S(X_3,Y_4)+S(X_4,Y_5))+\\
&S(X_5,Y_7)+S(X_6,Y_8)\\
= &R^1(2)-R^1(0)+\\
&R^2(5)-R^2(3)+\\
&R^3(8)-R^3(6).
\end{align*}
}\\[3mm]
Let us denote by 
$L_n^k(S)$ the optimal alignment score under the constrain that the 
alignment contains exactly $k$ gaps all aligned with letters from $Y$.  Since the gap penalty is $0$ the formula for
$L_n^k(S)$  is then given by
\begin{equation}
\begin{split}
L_n^k(S):=
\max_{0=c_0<c_1<\ldots<c_k<n} (R^1(c_1-1)-R^1(0))&+
(R^2(c_2-1)-R^2(c_1))+(R^3(c_3-1)-R^3(c_2))+  \\ 
&\dots+(R^{k+1}(n)-R^{k+1}(c_k)).
\end{split}
\end{equation}
Hence, the maximum is obtained by choosing  the increasing sequence
$0<c_1<c_2<\ldots<c_k<n$ which maximizes
the following sum:\\
take the increment of the first
random walk on $[0,c_1]$, then we add the increment of the second random
walk on $[c_1,c_2-1]$ and keep on going until the increment of the last random walk
on $[c_k,n]$ has been added.  

Next, we are going to put the different one-dimensional random walks $R^i$
together into one vector: 
$$\vec{R}(t):= \left(\begin{array}{l}
R^1(t)\\
R^2(t)\\
R^3(t)\\
\ldots\\
R^{k+1}(t)
\end{array}\right)
$$
In this manner $\vec{R}$ becomes a $k+1$-dimensional "correlated" random walk.
Thus we have
$\vec{R}(0)=(0,0,0,\ldots,0)^T$
and
\[
\vec{R}(1)=\left(
\begin{array}{l}
S(X_1,Y_1)\\
S(X_0,Y_1)\\
S(X_{-1},Y_1)\\
\ldots\\
S(X_{1-k},Y_1)
\end{array}\right)
,
\vec{R}(2)-\vec{R}(1)=\left(
\begin{array}{l}
S(X_2,Y_2)\\
S(X_1,Y_2)\\
S(X_{0},Y_2)\\
\ldots\\
S(X_{2-k},Y_2)
\end{array}\right)
,\ldots .
\]

If we subtract the drift and rescale by $\sqrt{n}$, the multidimensional correlated random walk $\vec{R}(t)$ converges to a $k+1$ dimensional Brownian motion and we refer to \cite{MR0375412,MR0402883} for more details. This means, that holding
$k$ fixed, we can find a $\delta_n$ which goes to $0$
as $n$ goes to infinity so that
there exists a $k+1$-dimensional Brownian motion 
$\vec{B}(t)=(B^1(t),B^2(t),\ldots,B^{k+1}(t))^T$
for which
\begin{equation}
\label{approxB}
\max_{t\in[0,1]} \left|
\frac{\vec{R}(tn)-E[\vec{R}(tn)]}{\sqrt{n}}-\vec{B}(t)\right|\leq 
\delta_n,
\end{equation}
holds with probability $1-\epsilon_n$
where $\epsilon_n$ goes to $0$ as $n\rightarrow\infty$.  Here the Brownian motion $\vec{B}(t)$ depends on $n$, but of course,  its distribution does not. 
  If $s\in[0,n]$ is not an integer
then we take for $\vec{R}(s)$
 the linear interpolation of $k\mapsto \vec{R}(k)$.
Let us introduce a small modification of $L_n^k(S)$.
This modification will be denoted by
$\tilde{L}_n^k(S)$ and is equal
to
\begin{align*}
&\tilde{L}_n^k(S)=\\
&\max_{0<c_0<c_1<\ldots<c_k<n)} (R^1(c_1)-R^1(0))+
(R^1(c_2)-R^1(c_1))+(R^2(c_3)-R^2(c_2))+\ldots +(R^{k+1}(n)-R^{k+1}(c_k)).
\end{align*}
Clearly with this definition,
\begin{equation}
\label{tilde}
|\tilde{L}_n^k(S)-L_n^k(S)|\leq C\cdot k,
\end{equation}
where $C:=\max_{a,b\in\mathcal{A}}|S(a,b)|.$
Note that for all the random walks $R^i$, the drift is the same and equal 
to $E[S(X_1,Y_1)]$.
Now, we can rewrite $\tilde{L}^k_n(S)$ using
the unbiased random walk
\begin{equation}
\label{tilde2}
\tilde{L}_n^k(S)=
\left(\max_{0<c_0<c_1<\ldots<c_k<n} \sum_{i=1}
\left((R^i(c_i)-R^i(c_{i-1}))-E[R^i(c_i)-R^i(c_{i-1})]
\right)\right)\;\;+\;\;n\cdot E[S(X_1,Y_1)].
\end{equation}
Assume next that inequality \eqref{approxB} holds.
Then, we can approximate the unbiased random walks 
 on the right side of \eqref{tilde2}
 by  components of a Brownian motion according to  \eqref{approxB}.
Since the sum contains $2(k+1)$ terms, the approximation error 
will be at most $\delta_n\cdot2(k+1)$ (if we assume
\eqref{approxB} to hold) and thus
\begin{equation}
\label{gissv}
\left|\frac{\tilde{L}^k_n(S)-n\cdot E[S(X_1,Y_1)]}{\sqrt{n}}-L^k(\vec{B})
\right|\leq 
2\delta_n(k+1),
\end{equation}
where $L^k(\vec{B})$ refers to the maximum
$$L^k(\vec{B}):=\max_{0<t_1<\ldots<t_k<1}\left((B^1(t_1)-B^1(0))+
(B^2(t_2)-B^2(t_1))+\ldots+(B^{k+1}(1)-B^{k+1}(t_k))\right).
$$
We can now use inequality \eqref{gissv} together with \eqref{tilde}
to find
\begin{equation}
\label{gissv2}
\left|\frac{L^k_n(S)-n\cdot E[S(X_1,Y_1)]}{\sqrt{n}}-L^k(\vec{B})
\right|\leq 
2\delta_n(k+1)+ \frac{2C\cdot k}{\sqrt{n}}.
\end{equation}

The component of our Brownian motions $\vec{B}(t)$ could be
correlated.  To see this, let's calculate the covariance between the first two random walks,
that is between $R^1(n)-R^1(0)$ and $R^2(n)-R^2(0)$.
The idea is that most terms of the sum corresponding to the random
walk are not correlated, however the closer ones are correlated. To the point, take for example
the first term $S(X_{1},Y_{1})$ in the sum
\begin{equation}
\label{R1}
R^1(n)-R^1(0)=S(X_1,Y_1)+S(X_2,Y_2)+\ldots+S(X_n,Y_n).
\end{equation}
Since, the $X_i$'s and $Y_i$'s are
all independent of each other,  an expression of the type $S(X_i,Y_j)$ can only be correlated
with $S(X_1,Y_1)$ if either $i=1$ or $Y=1$.  Therefore,  in the sum
\begin{equation}
\label{R2}
R^2(n)-R^2(0)=S(X_0,Y_1)+S(X_1,Y_2)+\ldots+S(X_n,Y_n)
\end{equation}
there are only two terms which can have non-zero correlation
to $R^1(1)-R^1(0)=S(X_1,Y_1)$. These terms are $S(X_0,Y_1)$ and $S(X_1,Y_2)$.
This analysis holds true for every term in the sum for the first random walk. 
Only in the very last term in the sum on the right side of \eqref{R1} there is a term
for which there are not two terms in the sum on the right side
of \eqref{R2} correlated with it. Hence, the covariance
between \eqref{R1} and \eqref{R2} is about $2n$ times
constant, where the constant is given as the covariance
between $S(X_1,Y_1)$ and $S(X_0,Y_1)$. 

Let us expand on this calculation  a little bit by looking at the covariance terms
\begin{align*}
&COV(R^1(n)-R^1(0),R^2(n)-R^2(0))=\\
&COV(\sum_{i=1}^nS(X_i,Y_i),\sum_{i=1}^nS(X_{i-1},Y_i))=\\
&\sum_{i=1}^n COV(S(X_i,Y_i),S(X_{i-1},Y_i))+
\sum_{i=1}^{n-1}COV(S(X_i,Y_i),S(X_i,Y_{i+1}))=\\
&n COV(S(X_1,Y_1),S(X_0,Y_1))+(n-1)COV(S(X_1,Y_1),S(X_1,Y_2))
\approx\\
&2n\cdot  COV(S(X_1,Y_1),S(X_0,Y_1)).
\end{align*}

We can  work out the same calculation for all pairs $i,j\leq k+1$ of random
walks $R^i$ and $R^j$ provided $i\neq j$. The covariance is always
about $2n\cdot COV(S(X_1,Y_1),S(X_0,Y_1))$.
Hence, the multi-dimensional Brownian motion in our approximation 
\eqref{approxB} has a covariance matrix with all non-diagonal
 entries equal
to $COV(S(X_1,Y_1),S(X_0,Y_1))$. Hence, we find
$$COV(B^i(t),B^j(t))=t\cdot 2COV(S(X_1,Y_1),S(X_0,Y_1))
$$
for $i\neq j$
and 
\[
VAR[B^i(t)]=t\cdot VAR[S(X_1,Y_1)].
\]
This is a simple covariance structure since there is one
common factor contained in all the Brownian motions
$B^i(t)$ if $COV(S(X_1,Y_1),S(X_0,Y_1))$ is non-zero.
In other words, we find that 
there exists a multidimensional Brownian motion
$\vec{W}(t)=(W^1(t),W^2(t),\ldots,W^{k+1}(t))$ with all components 
independent and having variance
$$VAR[W^i(t)]=t 
\left(VAR[S(X_1,Y_1)]-2COV(S(X_1,Y_1),S(X_0,Y_1))\right)
$$
so that 
$$\vec{B}(t)=\vec{W}(t)+V(t)I$$
where $I$ represents the $k+1$-dimensional
identity matrix and $V(t)$ is a one dimensional
Brownian motion which is independent of $\vec{W}(t)$.
(To understand more in detail how we get
the $V$ and the $W$ consider this: simply take a Brownian motion $V$
with $0$ expectation starting at $0$
 and independently a collection of Brownian
motions $W^1,W^2,\ldots,W^{k+1}$ which among themselves are i.i.d. 
Also, the $W_j$'s have all $0$ expectation and start at $0$.
You can now chose all these Brownian motion,
so that $V+W^1,V+W^2,\ldots,V+W^k$ have the same correlation
structure then the collection $B^1,B^2,\ldots,B^{k+1}$. Then,
because you deal with gaussian process you have the same distribution.
Then you can make a perfect coupling between the $V+W^1,V+W^2,\ldots,V+W^k$
and the $B^1,B^2,\ldots,B^{k+1}$ since they have the same distribution).
We also have 
\begin{equation}
\label{1054}
VAR[V(t)]=2tCOV(S(X_1,Y_1),S(X_1,Y_2)).
\end{equation}
Hence, we find that
\begin{equation}\label{target}
\begin{split}
L^k(\vec{B}):&=\max_{0<t_1<\ldots<t_k<1}\left((B^1(t_1)-B^1(0))+
(B^2(t_2)-B^2(t_1))+\ldots+(B^{k+1}(1)-B^{k+1}(t_k))\right)\\
&=V(1)-V(0)+L^k(\vec{W}),
\end{split}
\end{equation}
where
\begin{equation}
\label{I}
L^k(\vec{W}):=\max_{0<t_1<\ldots<t_k<1}\left((W^1(t_1)-W^1(0))+
(W^2(t_2)-W^2(t_1))+\ldots+(W^{k+1}(1)-W^{k+1}(t_k))\right).
\end{equation}
It is known (see \cite{MR1818248,MR1830441,MR1887169}), that  the rescaled $L^k(W)$ goes asymptotically
 to Tracy-Widom, more precisely, 
\begin{equation}
\label{II}
k^{1/6}\cdot (L^k(\vec{W})-2\sqrt{k})\rightarrow F_{TW}
\end{equation}
where convergence is in law as $k\rightarrow\infty$.
Hence, there is a sequence of random variable $\mathcal{E}_k$ which goes
to $0$ in probability as $k\rightarrow\infty$, and a sequence
of variables $F_{TW}^k$ each having same Tracy-Widom distribution,
so that
\begin{equation}
\label{losely}
L^k(\vec{W})=2\sqrt{k}+\frac{F_{TW}^k}{k^{1/6}}+\frac{\mathcal{E}_k}{k^{1/6}}.
\end{equation}
 We can now rewrite 
expression \eqref{gissv2} using \eqref{target} and get
\begin{equation}
\label{gissv3}
\left|\frac{L^k_n(S)-n\cdot E[S(X_1,Y_1)]}{\sqrt{n}}-(V(1)-V(0)+L^k(\vec{W}))
\right|\leq 
2\delta_n(k+1)+ \frac{2C\cdot k}{\sqrt{n}}
\end{equation}
which, with the help of equation \eqref{losely}, becomes
\begin{equation}
\label{gissv4}
\left|\frac{L^k_n(S)-n\cdot E[S(X_1,Y_1)]-2\sqrt{nk}}{\sqrt{n}}-(V(1)-V(0)+
\frac{F_{TW}^k}{k^{1/6}})
\right|\leq 
2\delta_n(k+1)+ \frac{2C\cdot k}{\sqrt{n}}+\frac{\mathcal{E}_k}{k^{1/6}}.
\end{equation}
For the above inequality to be useful, we need the right side of it to be smaller
than $\frac{1}{k^{1/6}}$.  This will depend on the value of $\delta_n$.
Now, we want to let $k$ go to infinity at a slower rate then $n$.
Most papers on closeness of random walk and Brownian motion are for fixed
dimension. Also, in our case, any $k$ consecutive components of the multi-dimensional random walk $\vec{R}_i$ are correlated.  However,  still assuming that the approximations hold for fixed dimension and correlated multidimensional random walk, we could take $\delta_n$ to be
\[
\delta_n=\frac{\log n}{n^{1/4}}
\]
whilst 
\[
\epsilon_n=\exp(-a \sqrt{\log(n)})
\]
for $a>0$ a constant not depending on $n$ or $k$.   In this case for the inequality \eqref{gissv4} to be useful,  we would need
\[
2\delta_n(k+1)+ \frac{2C\cdot k}{\sqrt{n}}+o\left(\frac{1}{k^{1/6}}\right)=
2\frac{\log(n)(k+1)}{n^{1/4}}+ \frac{2C\cdot k}{\sqrt{n}}+o\left(\frac{1}{k^{1/6}}\right)
\]
to be of smaller order than $\frac{1}{k^{1/6}}$.
This is the case when $k=n^\alpha$, with $\alpha$ small enough.

%

\section{Letting the number of gaps $k=n^{\alpha}$ and normal letters}\label{s:4}

In this section, we are going to treat the case where the number of gaps
$k$ increases to infinity with $n$ and the simplest such scenario is the case of  
\[
k=n^\alpha
\] 
for some small $\alpha>0$ which we assume is not depending on $n$. Again,  we look
for the best alignment score under all alignments with the prescribed number of gaps being $k$ and the gaps appear only in the sequence of $X$'s, thus we allow gaps to align with $Y$'s. 

The highest alignment score among all alignments which align exactly $k$ letters of $Y=Y_1\ldots Y_n$ with gaps, but no letter of $X=X_1\ldots X_n$ with gaps, is again denoted by
\[
L^k_n(S)=\text{Optimal alignment score with exactly } k \text{ gaps in } X.
\]
When the number of gaps is fixed, in determining which alignment is optimal, the penalty for gaps does not matter much.  Indeed, the gap penalty will affect the optimal alignment only by a term of order $n^{\alpha}$.
Moreover,  if we assume that the penalty of aligning a letter with a gap does not depend on the letter, we can simply assume that the gap penalty is actually equal to $0$.

Therefore, the only part of the scoring functions which matters is the $2\times 2$ symmetric matrix
\[
\left(\begin{array}{cc}
S(a,a)&S(a,b)\\
S(b,a)&S(b,b)
\end{array}
\right).
\]
The space of all such  $2\times 2$ symmetric matrices constitute a $3$ dimensional space where a basis is formed by the matrices
\[
S_{0}=\left(\begin{array}{cc}
1&1\\
1&1
\end{array}
\right),\,
S_{1}=\left(\begin{array}{cc}
1&0\\
0&-1
\end{array}
\right),\,
S_{2}=\left(\begin{array}{cc}
1&-1\\
-1&1
\end{array}
\right)
\]
In addition, if we define two scoring matrices to be \emph{equivalent} if the optimal alignments are the same, then it is easy to see that for any arbitrary scoring function $S$, the scoring itself $S$ and  $S-aS_{0}-bS_{1}$ are equivalent.  Thus the main task is to understand what happens under alignments with the scoring function $S_{2}$.  Thus we assume for the rest of the section that 
the scoring function $S$ is equal to $S_2$, so that:
\[
\left(\begin{array}{cc}
S(a,a)&S(a,b)\\
S(b,a)&S(b,b)
\end{array}
\right)=\left(\begin{array}{cc}
1&-1\\
-1&1
\end{array}
\right).
\]
Now, in the case considered here of a two dimensional alphabet, that scoring function can be viewed
as a product scoring function by observing that if we introduce a numeric code for the letters, namely take $a$
to be $+1$ and $b$ to be $-1$. Then we find
that
\[
 S(X_i,Y_j)=X_i\cdot Y_j.
 \]

Using this in \eqref{target}, the key observation is that the part $V(t)$ has covariance given in
formula \eqref{1054} by
\[
VAR[V(t)]=2COV(S(X_1,Y_1),S(X_1,Y_2)),
\]
which in the present case becomes
\begin{align*}
&VAR[V(t)]=COV[X_1Y_1,X_1Y_2]=E[X_1^2Y_1Y_2]-E[X_1Y_1]\cdot E[X_1Y_2]=\\
&=E[X_1^2]E[Y_1]E[Y_2]-E[X_1]E[Y_1]\cdot E[X_1]E[Y_2]=0
\end{align*}
due to independence and mean $0$ of $X_1,Y_1,Y_2$. Hence, the term $V(t)$ disappears from
inequality \eqref{gissv4} for this choice of the scoring function. 

Next, we notice that for a one dimensional ``uncorrelated'' random walk,  the best coupling of the walk with a Brownian motion in \eqref{approxB} can be attained with $\delta_n=\frac{\ln(n)}{n^{1/4}}$ (see  \cite{MR0402883}).

With this choice of $\delta_n$, the term $\frac{2C\times k}{\sqrt{n}}$ in \eqref{gissv4} is of order $O(n^{\alpha-1/2})$, thus is much smaller than $2\delta_n(k+1)$ which is of order $O(n^{\alpha-1/4}\ln(n))$.
However, since we have to correlate a large number of random walks (exactly $k$ and not independent), we expect $\delta_n$ to be larger than $O(\ln(n)/n^{1/4})$.   

A different look at the dominating term on the right hand side of \eqref{gissv4}, gives that 
\begin{equation}
2\delta_n\cdot (k+1)= O(n^{\alpha-1/4}\ln(n))=o(1/k^{1/6}) 
\end{equation}
which is enough (at least heuristically) to guarantee that the rescaled optimal score is asymptotically Tracy-Widom.  

In order to deal with the approximation of the random walk by Brownian motions, we need to get good couplings of the random walk with Brownian motions.   This is what we want to do next.  

We should also quickly mention, that at this stage, it should be heuristically clear why we should expect 
Tracy-Widom as limiting distribution as soon as the constant $\alpha>0$
is small enough. Indeed, when $\alpha>0$ is very small, then $\delta_n$ should 
be as ``close as we want'' to the formula for the one dimensional 
random walk with independent increments.  Now, for the one-dimensional  random walk, 
it is known to allow a coupling to a Brownian motion with
 $\delta_n=\frac{\ln(n)}{n^{1/4}}$.  Again, the only problem is  to find
a good coupling of the multidimensional random
walk $\vec{R}(t)$ to a multidimensional Brownian motion.
The random walk $\vec{R}(t)$ has $k+1$ components. Furthermore,
there is dependency among the increments of the random walk.
Each step (increment) of the random walk, can depend on
$k$ steps  before and after.  Hence, $\vec{R}(t)$ is independent
of $\vec{R}(t+k+1),\vec{R}(t+k+2),\vec{R}(t+k+3),\ldots$
for all $t\geq 0$.   

The main question is thus how well can a random walk with dependencies be approximated by Brownian motion?   (See  \cite{MR0375412,MR0402883} for the one dimensional case, \cite{MR996984} for severeal dimensions and  \cite{MR2068476,MR666546} for overviews.) 

Is it possible to find such a coupling when the dimension of the random walk grows to infinity? This is delicate as the constants involved depend on the dimension and thus are not very useful in our case.  

Unfortunately we can not treat both problems simultaneously and it seems highly technical to deduce such a result in great generality.    Instead, we present here a simple approach for  a simplified situation.

First, we use the fact  that our scoring function is a product. Furthermore, \emph{for the rest of the paper we also  assume the variables $Y_1,Y_2,\ldots$ to be  i.i.d. normal with 
\[
Y_i\sim \mathcal{N}(0,1)
\]
instead of binary variables. The variables $X_1,X_2,\ldots$ 
are still binary variables in this section with $P(X_i=-1)=P(X_i=+1)=0.5.$} 

Again, since  $E[X_i]=0$ the component $V(1)-V(0)$  in the equation \eqref{gissv4} dissapears. 

The normal assumption on $Y$'s is in place here to simplify the calculations. We believe
that macroscopically it amounts to the same as taking these variables binary.   The fact that the variables $Y_1,Y_2,\ldots$ are standard normal instead of binary has a very important consequence on the calculation side, namely, the optimal alignment still tries to align
the maximum number of letter with the same sign and also will try to get that those $Y_i$'s 
which get aligned with an $X_i$ of opposite sign to be small in absolute value. 

The key strategy of the proof is to condition on the sequence $X$ and exploit the normal structure of the $Y$.  To set up the ground, let $X=(X_1,X_2,\ldots,X_n)$ and let $x=(x_1,x_2,\ldots,x_n)\in\{-1,1\}^n$
be a non-random binary vector.   We are going to work conditional on 
$X=x$.    

Now, consider our random walk $\vec{R}$ in the current case where  $S(a,b)=a\cdot b$. We have
\[
\vec{R}(i)=(R^1(i),R^2(i),\ldots,R^{k+1}(i))^{t},
\]
where
$\vec{R}(0)=(0,0,0,\ldots,0)^{t}$
and
\[
\Delta R(1)=\vec{R}_1=\left(
\begin{array}{l}
X_1\cdot Y_1\\
X_0\cdot Y_1\\
X_{-1}\cdot Y_1\\
\ldots\\
X_{1-k}\cdot Y_1
\end{array}\right)
,
\Delta R(2)=\vec{R}_2-\vec{R}_1=\left(
\begin{array}{l}
X_2\cdot Y_2\\
X_1\cdot Y_2\\
X_{0}\cdot Y_2\\
\ldots\\
X_{2-k}\cdot Y_2
\end{array}\right),
\dots
\]
Looking at the above expression for the random walk $\vec{R}$,
we see that the steps are dependent but once we condition the $X_i$'s to be constants, i.e. $X=x$ and  $Y_1,Y_2,\ldots$ to be i.i.d $N(0,1)$, the $\Delta R(i)$'s become independent normal vectors. Hence, 
conditional on $X=x$,  we get that the multidimensional random walk $\vec{R}$ has  independent normal increment with expectation $0$.   

This random walk is not yet a Brownian motion restricted to discrete times but we are going to show that we can construct such a Brownian motion which is a good approximation.  
In order to do this \emph{we consider at first the random walk $\vec{R}(t)$ restricted to the times $t$ which are a multiple of 
\[
j:=[n^{\beta}]=\text{time-mesh size for coupling the random walk}
\]}
with some small $\beta>0$ which will be determined later and $[x]$ denoting the integer part of $x$.  

Thus, we will consider 
\[
\vec{R}(j),\vec{R}(2j),\vec{R}(3j),\ldots, R(n),
\]
where to simplify notation we assume $n$ to be a multiple of $j$, though we can formally replace $n$ by $jn_{j}$ with $n_{j}=[n/j]$ and run essentially the same argument.   

Using the independence of  the increments conditional on $X=x$,  we calculate the covariance
matrix of the position of the random walk at time $j$ in the following way:
\begin{align}
\label{schneesturm}&COV[\vec{R}_{j}|X=x]=
COV\left[\sum_{i=1}^j\Delta R_i|X=x\right]=
\sum_{i=1}^jCOV[\Delta R_i|X=x]\\
&=\sum_{i=1}^j
\left(\begin{array}{c}
x_{i}\\
x_{i-1}\\
x_{i-2}\\
...\\
x_{i-k+1}
\end{array}
\right)\cdot (x_{i},x_{i-1},x_{i-2},\ldots,x_{i-k+1}).
\end{align}
The key fact here is that with very high probability the last sum in the above equation is approximately $jI$ where $I$ is the $k+1$ identity matrix.   

Now, using this, and the fact that $jI$ is the covariance matrix of a standard $k+1$-dimensional Brownian motion
at time $j$ we will show that there is a coupling between the random walk and a standard Brownian motion such that  $\vec{R}(j)$ is close to the Brownian motion position at time  $j$.   Moreover the argument can be extended to include also the increments of the random walk.  More precisely, we show that the coupling can be chosen such that the increments
\[
\vec{R}(2j)-\vec{R}(j),\vec{R}(3j)-\vec{R}(2j),
\vec{R}(4j)-\vec{R}(3j),\ldots,\vec{R}(n)-\vec{R}(n-j)
\]
are also close to the increment of the coupled Brownian motion.  

To construct such a coupling, we will build a Brownian motion
$$\vec{C}(t)=(C^1(t),C^2(t),\ldots,C^{k+1}(t))^{t}$$ by coupling at the times $j,2j,3j,\ldots,n$ at first. 
More precisely, we couple $\vec{R}(j)$ with $\vec{C}(j)$, then the increment $\vec{R}(2j)-\vec{R}(j)$
with the increment $\vec{C}(2j)-\vec{C}(j)$ and then we continue coupling the next increment $\vec{R}(3j)-\vec{R}(2j)$ with $\vec{C}(3j)-\vec{C}(2j)$ and so on and each step we do this independently of the construction at the previous steps.  Recall that we do this conditionally on $X=x$.    

The key is that the construction at each step is a consequence of Lemma~\ref{coupling} in the next Section \ref{howto} where we show how to couple a normal random variable having covariance close to a multiple of the identity. Essentially the main assertion here is that if $\vec{A}$ is a normal vector such that 
\[
|Cov(A)-\sigma^{2}I_{d}|\le \epsilon
\]
then we can find a normal vector $\vec{B}$ of i.i.d normal components $N(0,\sigma^{2})$ such that $|Cov(\vec{A}-\vec{B})|\le \epsilon$.  Here we use the operator norm $|A|$ for a matrix and measure the closeness in the operator norm, in particular all components of $\vec{A}-\vec{B}$ have variance at most $\epsilon$.   


Once all the increments $\vec{C}((i+1)j)-\vec{C}(ij)$ are defined, we define 
$\vec{C}(t)$ for $t=j,2j,3j,\ldots,n$ as the sum
\[
\vec{C}(t):=\sum_{i=1}^{t/j}(\vec{C}((i+1)j)-\vec{C}(ij)).
\]
Conditional on $X=x$, the pieces
\begin{equation}
\label{haha}
\vec{R}((i+1)j)-\vec{R}(ij)
\end{equation}
for $i=1,2,\ldots, n/j$ are independent. Since
\[
\vec{C}((i+1)j)-\vec{C}(ij)
\]
depend only on \eqref{haha}, this implies that conditional on $X=x$, 
the sequence of random vectors
\[
\vec{C}(0),\vec{C}(j),\vec{C}(2j),\vec{C}(3j),\ldots,\vec{C}(n)
\]
constitutes a multidimensional standard Brownian motion restricted to the times $0,j,2j,3j,\ldots,n$. We can for instance use the construction of Levy for the Brownian motion to extend these vectors to a Brownian motion such that the increments are given exactly by $\vec{C}((i+1)j)-\vec{C}(ij)$ for $i=1,2,\dots, n/j$.  We do not look for other details about the Brownian motion during the times $[ij,(i+1)j]$, we will simply use some basic estimates for the maximum of Brownian motions between these time intervals.   For instance using the reflection principle we can show that 
\[
P(\max_{u\in [s,t]}|\vec{C}(u)-\vec{C}(s)|\ge \lambda)\le const(k+1)e^{-\lambda^{2}/(2(t-s))},\quad \forall \lambda>0. 
\]
In particular what this yields is that for some constant $c$ independent of $n$ and $\alpha$
\[
P(\max_{u\in [ji,j(i+1)]}|\vec{C}(u)-\vec{C}(s)|\ge \log(n)\sqrt{j})\le cn^{\alpha}e^{-\log^{2}(n)/2}
\]
thus the deviation of the Brownian motion on each of these intervals is very well controlled.


Now define a few events related to our coupling which will take care of the main estimates.   

\begin{itemize}
\item Let $\bf{F^n}$ be the event that none of the components
of the standard multidimensional Brownian motions $\vec{C}(t)$ goes further than $\log(n)\sqrt{j}$ during any
of the time intervals $[ij,(i+1)j]\subset [0,n]$ for $i=0,1,2,\ldots,n/j$
That is let $F^n_l$ be the event that
\[
\sup_{s\in[0,j]}|C^l(ij+s)-C^l(ij)|\leq \log(n)\sqrt{j}
\]
for all $i=1,2,\ldots,n/j$.  Set 
\[
F^n=\cap_{l=1}^{k+1} F^n_l.
\]
\item Let $\bf{G^n}$ be the event that the multidimensional random walk 
$\vec{R}(t)$ does not move further than $\log(n)\sqrt{j}$ during any
of the time intervals $[ij,(i+1)j]\subset [0,n]$.
Hence, let $G^n_l$ be the event
\[
\sup_{s\in[0,j]}|R^l(ij+s)-R^l(ij)|\leq \log(n)\sqrt{j}
\]
for all $i=1,2,\ldots,n/j$.
Let 
$$G^n=\cap_{l=1}^{k+1} G^n_l.$$
\item{}The event $\bf{A^n}$ takes care  of the fact that the covariance matrix
of $\vec{R}(ij)-\vec{R}((i-1)j)$ for all $i=1,2,\ldots,n/j$ is close
to $jI$,were $I$ denotes the $(k+1)\times(k+1)$ identity matrix. More specifically,
let thus $A^n_1$  be the event that
\[
|COV[\vec{R}(j)]-jI|\leq C\cdot k\sqrt{j},
\]
where $C>0$ is a constant which will be defined later.
Similarly for any integer $i$ so that 
\begin{equation}
\label{ij}
[ij,(i+1)j]\subset[0,n]
\end{equation}
holds,
let $A^{n}_i$ be the event that
\[
|COV[\vec{R}((i+1)j)-\vec{R}(ij)]-
jI|\leq C\cdot k\sqrt{j}.
\]
Finally let $A^n$ be the event 
\[
A^n=\cap_{i=1}^{n/j} A^n_i.
\]
\item The next event $\bf{D^n}$ assures that the random walk $\vec{R}(t)$ and 
the Brownian motion $\vec{C}(t)$ are close to each 
other at times which are multiples of $j$. More specifically,
let $D^n_l$ be the event that the $l$-th component
of $\vec{C}(t)-\vec{R}(t)$ at times which are multiples of $j$
is bounded as follows:
\[
\max_{i=1,\ldots,n/j}|C^{l}(ij)-R^l(ij)|\leq 
\log(n) \cdot\sqrt{k}\frac{\sqrt{n}}{j^{1/4}}.
\]

Let $D^n$ be intersection of these for  all $l$, precisely:
$$D^n=\cap_{l=1}^{k+1} D^n_l.$$
\end{itemize}

Now, we summarize the picture we have obtained so far.  
Rescale the random walk and the Brownian motion $\vec{C}_t$
by a factor $\sqrt{n}$ and the time by a factor $n$ by setting for $t\in[0,1]$
\begin{equation}
\label{greek}\vec{B}(t):=\frac{\vec{C}(tn)}{\sqrt{n}}.
\end{equation}
By scaling the Brownian motion, we know that $\vec{B}(t)$ is again a Brownian motion.  When the events  $D^n$, $F^n$ and $G^n$ all hold,  inequality \eqref{approxB}
is satisfied for the multi-dimensional 
Brownian motion $\vec{B}_t:[0,1]\rightarrow\mathbb{R}^k$ with
$\delta_n$ given as
\[
\delta_n:=4\max\left\{\frac{\log(n)\sqrt{j}}{\sqrt{n}},
\frac{\log(n)\sqrt{k}}{j^{1/4}}\right\}
\]
where again, the number of gaps $k$ and the time step are given by 
\[
k=n^\alpha \text{ and } j=n^\beta.
\]
We can now go back to inequality \ref{gissv4},
to find that:
\begin{align}\label{turkish}
&\left|\frac{L^k_n(S)-n\cdot E[S(X_1,Y_1)]-2\sqrt{nk}}{\sqrt{n}}-
\frac{F_{TW}^k}{k^{1/6}}
\right|\leq \\
&\leq 9 
\max\{\frac{k\log(n)\sqrt{j}}{\sqrt{n}},
\frac{k\log(n)\sqrt{k}}{j^{1/4}}\}+ +\frac{\mathcal{E}_k}{k^{1/6}}
\end{align}
holds with probability at least
$1-\epsilon_n$
where
\begin{equation}
\label{epsilon_n}
\epsilon_n:=P((B^{n})^{c})+P((F^{n})^{c})+P((G^{n})^{c})\leq 
P((A^{n})^{c})+P((D^{n})^{c}|A^{n})+P((F^{n})^{c})+P((G^{n})^{c}).
\end{equation}
Recall also, that $\mathcal{E}_1,\mathcal{E}_2,\ldots$ designates a sequence
of random variables which go to $0$ in probability. The sequence
$F_{TW}^1,F^{2}_{TW},\ldots$ designates on the other hand a sequence
of random variables which are all identically distributed according
to Tracy-Widom.\\
We eliminated the term $\frac{2C\cdot k}{\sqrt{n}}$ by increasing the coefficient
in front of $\max\left\{\frac{k\log(n)\sqrt{j}}{\sqrt{n}},
\frac{k\log(n)\sqrt{k}}{j^{1/4}}\right\}$. 

Furthermore, from inequality \ref{turkish}, we obtain the important conclusion that 
in order to show that the re-scaled optimal alignment score 
with $k=n^{\alpha}$ imposed gaps goes to Tracy Widom
we only need to prove prove two things:

{\bf I:} $\epsilon_n$ goes to zero, hence
\[
P((A^{n})^{c})+P((D^{n})^{c}|A^{n})+P((F^{n})^{c})+P((G^{n})^{c})\rightarrow 0
\]
as $n\rightarrow \infty$, which we solve in Section \ref{highprob}.

{\bf II:} The term 
\begin{equation}
\label{reference}
\max\left\{\frac{k\log(n)\sqrt{j}}{\sqrt{n}},
\frac{k\log(n)\sqrt{k}}{j^{1/4}}\right\}
\end{equation}
in the bound in \ref{turkish} is of order less
than $\frac{1}{k^{1/6}}$.  This is the place where we choose $\alpha$ and $\beta$.  

We have to verify that there exists a choice
$\alpha>0$ and $\beta>0$ making expression \ref{reference} 
less than order $1/k^{1/6}$.  
In other words, we want  
\begin{equation}
\label{O}
O\left(\max\left\{\frac{n^\alpha\log(n)\sqrt{n^\beta}}{\sqrt{n}},
\frac{n^\alpha\log(n)\sqrt{n^{\alpha}}}{n^{1/4}}\right\}\right)<
O\left(\frac{1}{n^{\alpha/6}}\right)
\end{equation}
 which is satisfied for $n$ large enough as soon as
\[ 
\frac{20}{3}\alpha< \beta<1-\frac{7}{3}\alpha.
\]
This is of course possible if and only if 
\[
\frac{20}{3}\alpha< 1-\frac{7}{3}\alpha
\]
which leads to $\alpha< \frac{1}{9}$.  Thus, for any $\alpha<\frac{1}{9}$,
we  have convergence of the re-scaled optimal alignment score 
to Tracy-Widom.   Hence, we are able to prove our result
provided there are less than order $n^{1/9}$ gaps in the optimal alignment!
Let us formulate this fact in the form of a Theorem.  

\begin{theorem}\label{t:1}
Let $\alpha\in[0,1/9)$ be a constant. Let $X_1,X_2,\ldots$ be an i.i.d.
sequence of binary variables so that
$$P(X_i=1)=P(X_i=-1)=1/2$$
and let $Y_1,Y_2,\ldots$ be an i.i.d.
 sequence of standard normal random variables.
Let $S_2$ denote the scoring function defined by $S_2(c,d)=c\cdot d$
for any two letters $c,d$ of our binary-alphabet.
Let $k=[n^\alpha]$. Then, the optimal alignment score with constrain
to have $k$ gaps and properly rescaled as 
\begin{equation}
\label{expression}
\frac{L^k_n(S)-n\cdot E[S(X_1,Y_1)]-2n^{1/2+\alpha/12}}{n^{1/2-\alpha/6}}
\end{equation}
converges in law to Tracy-Widom.
\end{theorem}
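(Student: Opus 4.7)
The plan is to complete the program outlined by the authors just after inequality~\eqref{turkish}, where items \textbf{I} and \textbf{II} are isolated. First I would pin down the parameters: given $\alpha\in[0,1/9)$, choose $\beta$ in the non-empty interval $(\tfrac{20}{3}\alpha,\,1-\tfrac{7}{3}\alpha)$ and set $j=[n^\beta]$. The algebra in \eqref{O} already delivers \textbf{II}, namely that the deterministic error term \eqref{reference} is of order $o(n^{-\alpha/6})=o(k^{-1/6})$. The real work is \textbf{I}, i.e., showing that $\epsilon_n\to0$ in \eqref{epsilon_n} by bounding the four tail probabilities $P((A^n)^c)$, $P((D^n)^c\mid A^n)$, $P((F^n)^c)$, $P((G^n)^c)$.

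The sample-path events $F^n$ and $G^n$ are handled by the reflection principle. Conditional on $X$, each coordinate of $\vec R$ has independent centered Gaussian increments of variance $1$ per unit time (the increment $X_\ell Y_s$ with $Y_s\sim\mathcal N(0,1)$), so on an interval of length $j$ the probability that the coordinate deviates by more than $\log(n)\sqrt j$ is at most $2e^{-\log^2(n)/2}$; the same bound applies verbatim to the coupled standard Brownian motion $\vec C$. A union bound over the $(n/j)(k+1)\le n^{1+\alpha}$ pairs of interval and coordinate gives $P((F^n)^c)+P((G^n)^c)=O(n^{1+\alpha}e^{-\log^2(n)/2})=o(1)$, since the Gaussian tail dominates any polynomial factor.

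The event $A^n$ is a concentration statement for a $(k+1)\times(k+1)$ random matrix. By \eqref{schneesturm}, the conditional covariance of $\vec R((i+1)j)-\vec R(ij)$ given $X$ has diagonal entries exactly $j$ (since $X_\ell^2=1$) and off-diagonal $(\ell,m)$ entry equal to $\sum_{s=1}^{j}X_{s-\ell+1}X_{s-m+1}$. Read in the natural order of the indices, this off-diagonal sum is a bounded martingale with increments $\le 1$, so Azuma--Hoeffding gives that each off-diagonal entry is $O(\sqrt{j\log(kn)})$ with probability at least $1-n^{-10}$. A union bound over the $(k+1)^2$ entries and the $n/j$ intervals, combined with the elementary estimate $|M|\le(k+1)\max_{\ell,m}|M_{\ell m}|$, then yields $|\mathrm{Cov}[\vec R((i+1)j)-\vec R(ij)]-jI|\le Ck\sqrt j$ simultaneously for all intervals, with probability $1-o(1)$ (the polylogarithmic factors being absorbed into $C$). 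For the coupling event $D^n$: on $A^n$, Lemma~\ref{coupling} produces, independently on each interval, a coupling of the Gaussian increment of $\vec R$ with an independent $\mathcal N(0,jI)$ vector such that each coordinate of their difference has variance at most $Ck\sqrt j$. Summing $n/j$ independent couplings makes the $\ell$-th coordinate of $\vec C(ij)-\vec R(ij)$ a centered Gaussian of variance at most $Ckn/\sqrt j$, and a Gaussian maximal inequality over the $n/j$ time points and $k+1$ coordinates delivers $\max_i|C^\ell(ij)-R^\ell(ij)|\le\log(n)\sqrt k\sqrt n/j^{1/4}$ with probability $1-o(1)$, which is exactly $D^n$.

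The main obstacle I anticipate is the operator-norm concentration for $A^n$: for the window $\alpha<1/9$ to survive, the operator-norm loss must scale only linearly in $k$, so either the entrywise Azuma-plus-union-bound argument must be executed carefully (taking care to absorb polylogarithmic factors into $C$ rather than letting them multiply $k$), or a Hanson--Wright/matrix-Bernstein type inequality must be invoked for sharper control. A by-product of the analysis is that the parameter range can be enlarged somewhat if the matrix concentration is sharpened. Once the four tail probabilities are verified to be $o(1)$, they plug directly into \eqref{turkish} and, together with the choice of $\beta$ dictated by \eqref{O}, yield the claimed convergence in law of \eqref{expression} to the Tracy--Widom distribution.
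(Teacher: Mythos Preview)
Your proposal is correct and tracks the paper's own argument closely: the reduction to items \textbf{I} and \textbf{II}, the choice of $\beta\in(\tfrac{20}{3}\alpha,1-\tfrac{7}{3}\alpha)$, the reflection-principle bounds for $F^n$ and $G^n$, and the use of Lemma~\ref{coupling} followed by a Gaussian maximal inequality for $D^n$ are all exactly what the paper does in Section~\ref{s:5}.

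The one genuine divergence is the treatment of $A^n$. The paper does \emph{not} bound the off-diagonal entries one at a time; instead it splits the sum in \eqref{schneesturm} according to residue classes modulo $k+1$, so that within each class the summands $\vec X_{i,l}\otimes\vec X_{i,l}$ are i.i.d.\ with covariance $I$, and then applies Vershynin's sample-covariance concentration directly in operator norm, obtaining $\|COV[\vec R_j]-jI\|\le Ck\sqrt j$ with exponentially small failure probability $\exp(-j/(k+1))$. Your route---Azuma--Hoeffding on each off-diagonal entry (your martingale observation is correct, since revealing the $X_t$ in increasing order makes $X_aX_{a-d}$ a martingale difference) followed by the crude bound $|M|\le(k+1)\max_{\ell,m}|M_{\ell m}|$---is more elementary and avoids the external reference, at the cost of an extra $\sqrt{\log n}$ factor. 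Your worry in the final paragraph is therefore unwarranted: that polylogarithmic loss propagates only into $\delta_n$ as a factor of $\log^{1/4}(n)$, and since the comparison in \eqref{O} is between polynomial orders, the window $\alpha<1/9$ survives intact. Either argument closes the proof.
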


\begin{proof}
Multiplying \eqref{turkish} by $k^{1/6}$, we find that
that
\begin{align}\label{turkish2}
&\left|k^{1/6}\cdot\frac{L^k_n(S)-n\cdot E[S(X_1,Y_1)]-2\sqrt{nk}}{\sqrt{n}}-
F_{TW}^k
\right|\leq \\
&\leq k^{1/6}\cdot 9 
\max\{\frac{k\log(n)\sqrt{j}}{\sqrt{n}},
\frac{k\log(n)\sqrt{k}}{j^{1/4}}\}+ \mathcal{E}_k
\end{align}
holds with probability at least
$1-\epsilon_n$, In the next section it is proven that $\epsilon_n$ goes to 
$0$ as $n\rightarrow\infty$. (Recall also inequality \eqref{epsilon_n}).
Furthermore, since $\alpha<1/9$,
we have from \eqref{O} that
$$k^{1/6}\cdot 9 
\max\{\frac{k\log(n)\sqrt{j}}{\sqrt{n}},
\frac{k\log(n)\sqrt{k}}{j^{1/4}}\}
$$ goes to $0$ as $n\rightarrow\infty$. Hence, since also
$\mathcal{E}_k$
go to $0$ in probability, we have that
$$\left|k^{1/6}\cdot\frac{L^k_n(S)-n\cdot E[S(X_1,Y_1)]-2\sqrt{nk}}{\sqrt{n}}-
F_{TW}^k
\right|$$
goes to $0$ in probability, when $n\rightarrow \infty$. Hence,
since $F_{TW}^k$ always has Tracy-Widom distribution,
it follows, that
\begin{equation}
\label{k1/6}k^{1/6}\cdot \frac{L^k_n(S)-n\cdot E[S(X_1,Y_1)]-2\sqrt{nk}}{\sqrt{n}}
\end{equation}
converges in Law to Tracy-Widom as $n\rightarrow\infty$.
But, when we replace $k$ by $n^\alpha$ in  expression
\ref{k1/6}, we obtain  expression \eqref{expression} from our theorem.
So, we just finished proving that \eqref{expression}
converges to Tracy-Widom in Law.
\end{proof}

{\bf Remark.} We believe that taking the variables $Y_1,Y_2,\ldots$
binary instead of standard normal our
theorem should holds for  $\alpha>0$ small enough. However, the proof for approximating
the multidimensional random walk $\vec{R}^j$ by a multidimensional
Brownian motion will be much more difficult.  The intricate part is very similar to passing from 
the CLT to Berry-Essen type CLT theorem is when
there are non-finite correlations or dimension goes
to infinity at the same time as $n$ (see \cite{bentkus}, \cite{goetze}).

Now we got Tracy-Widom limiting distribution for the binary case,
by using the scoring function $S_2$ and fixing the number
of gaps. If instead we use a scoring function 
\[
S=S_2+a_0S_1+a_1S_1
\]
then we get an additional normal term on the scale $\sqrt{n}$.
Indeed, $S_0$ and $S_1$ do not affect which alignment is optimal, thus,
\[
L_n^k(S_2+a_0S_0+a_1S_1)=
L_n^k(S_2)+L_n^k(a_0S_0+a_1S_1)
\]
As argued before, $L_n^k(a_0S_0+a_1S_1)$
depends only on the number of $a$'s and $b$'s in each
of the sequences $X$ and $Y$. It is even a linear functional
of these numbers of occurrences and hence asymptotically
normal on the scale $\sqrt{n}$.  On, the other hand, according to our theorem,
we have that after rescaling $L_n^k(S_2)$ is asymptotically Tracy-Widom 
on the scale  $n^{1/2-\alpha/6}$, (provided $\alpha\in[0,1/6)$ and
$k=n^\alpha$). Thus, there is a larger normal part and Tracy-Widom 
on a lower scale. But, for optimal alignment of DNA sequences,
one should always set $a_0=a_1=0$, since the part $a_0S_0+a_1S_1$ does not influence
which alignment is optimal. In  that case when $a_0=a_1=0$, 
the limiting distribution is purely Tracy-Widom.

\emph{The main question to ask here is what happens if we allow the number of gaps to be of order $O(n)$ which we leave as an open problem here. }

\section{Technical results}\label{s:5}

\subsection{ How to couple normal random vectors with close
covariance matrix}\label{howto}

The main coupling result is contained in the following Lemma.  

\begin{lemma}
\label{coupling}
Let $\vec{v}$ be a $d$-dimensional normal random vector with 
expectation $0$. Let $\epsilon>0$ and $j>0$.  Assume
that  
\begin{equation}\label{e:cov}
|COV[\vec{v}]-jI|\leq \epsilon.
\end{equation}
Then there exists a $d$-dimensional normal random vector $\vec{N}$ with covariance matrix $jI$ and expectation $0$ so that
\[
|COV[\vec{v}-\vec{N}]|\le\epsilon.  
\]
In particular all the components of $\vec{v}-\vec{N}$ have variance at most $\epsilon$.  
\end{lemma}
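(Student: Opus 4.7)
The plan is to couple $\vec{N}$ to $\vec{v}$ by a deterministic whitening map. Setting $\Sigma:=COV[\vec{v}]$, the hypothesis \eqref{e:cov} in operator norm forces every eigenvalue $\lambda_i$ of $\Sigma$ to satisfy $|\lambda_i-j|\le\epsilon$. I will work in the regime $\epsilon<j$ (which is automatic in the paper's application, where $\epsilon=Ck\sqrt{j}$ with $k/\sqrt{j}\to 0$), in which $\Sigma$ is strictly positive definite; then $\Sigma^{1/2}$ and $\Sigma^{-1/2}$ are well-defined symmetric matrices commuting with $\Sigma$. I then simply define
\[
\vec{N}:=\sqrt{j}\,\Sigma^{-1/2}\vec{v},
\]
which as a linear image of a centered Gaussian is itself centered Gaussian, with covariance $\sqrt{j}\,\Sigma^{-1/2}\,\Sigma\,\sqrt{j}\,\Sigma^{-1/2}=jI$, so $\vec{N}\sim\mathcal{N}(0,jI)$ as required.

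For the difference, using that $\Sigma^{-1/2}$ is symmetric and commutes with $\Sigma$, a direct expansion gives
\[
COV[\vec{v}-\vec{N}]=(I-\sqrt{j}\,\Sigma^{-1/2})\,\Sigma\,(I-\sqrt{j}\,\Sigma^{-1/2})=\Sigma-2\sqrt{j}\,\Sigma^{1/2}+jI=\bigl(\Sigma^{1/2}-\sqrt{j}\,I\bigr)^{2}.
\]
Diagonalizing $\Sigma$, the operator norm of this PSD matrix equals
\[
\max_{i}\bigl(\sqrt{\lambda_i}-\sqrt{j}\bigr)^{2}=\max_{i}\frac{(\lambda_i-j)^{2}}{(\sqrt{\lambda_i}+\sqrt{j})^{2}}\le\frac{\epsilon^{2}}{j},
\]
using $\sqrt{\lambda_i}+\sqrt{j}\ge\sqrt{j}$ and $|\lambda_i-j|\le\epsilon$. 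In the regime $\epsilon\le j$ the right-hand side is at most $\epsilon$, giving the desired operator-norm bound; since the operator norm of a symmetric PSD matrix dominates each of its diagonal entries, every component of $\vec{v}-\vec{N}$ then has variance at most $\epsilon$.

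The main obstacle is the borderline case in which $\Sigma$ can be singular (namely $\epsilon\ge j$); this lies outside the paper's regime, but can be absorbed by replacing $\Sigma$ with $\Sigma+\eta I$ for small $\eta>0$, running the same construction, and then letting $\eta\downarrow 0$, since the bound $\epsilon^{2}/j$ is continuous in $\Sigma$ under the constraint $\|\Sigma-jI\|\le\epsilon$. Modulo this technicality the argument is pure linear algebra and requires no further probabilistic input.
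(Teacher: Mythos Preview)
Your proof is correct and follows essentially the same approach as the paper: you define $\vec{N}=\sqrt{j}\,\Sigma^{-1/2}\vec{v}$, compute $COV[\vec{v}-\vec{N}]=(\Sigma^{1/2}-\sqrt{j}\,I)^2$, and bound the eigenvalues. The only differences are cosmetic: you obtain the slightly sharper bound $\epsilon^2/j$ (then note it is $\le\epsilon$), whereas the paper uses $(\sqrt{\lambda_i}-\sqrt{j})^2\le|\sqrt{\lambda_i}-\sqrt{j}|\,|\sqrt{\lambda_i}+\sqrt{j}|=|\lambda_i-j|\le\epsilon$ directly, and you explicitly address the positive-definiteness of $\Sigma$ which the paper leaves implicit.
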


\begin{proof}

The main idea is to construct a normal vector simply by rescaling the vector $\vec{v}$.  The construction uses the fact that if $A$ is a linear transformation from  $\R^{d}$ into itself, then $A\vec{v}$ is again a normal random variable of covariance matrix $A\,COV(\vec{v})A'$.  In this framework we take $\Sigma=COV(\vec{v})$ and  
\[
\vec{N}=j^{1/2}\Sigma^{-1/2}\vec{v}.
\] 
Clearly, we now have
\[
COV(\vec{N})=j\Sigma^{-1/2}\Sigma\Sigma^{-1/2}=jI.
\]
Furthermore, 
\[
COV[\vec{v}-\vec{N}]=COV[(I-j^{1/2}\Sigma^{-1/2})\vec{v}]=(I-j^{1/2}\Sigma^{-1/2})\Sigma(I-j^{1/2}\Sigma^{-1/2})=(\Sigma^{1/2}-j^{1/2})^{2}.
\]
Taking the eigenvalues of $COV(\vec{v})$ to be $\lambda_{1},\lambda_{2},\dots,\lambda_{d}$, the condition \eqref{e:cov} reads as 
\[
\max|\lambda_{i}-j|\le \epsilon. 
\]
On the other hand since $\lambda_{i}\ge0$, we also know have that $|\sqrt{\lambda_{i}}-\sqrt{j}|\le |\sqrt{\lambda_{i}}+\sqrt{j}|$.  Thus the conclusion is reached from the fact that the eigenvalues of $\Sigma^{1/2}$ are  $\lambda_{i}^{1/2}$ and the following chain of inequalities
\[
|\sqrt{\lambda_{i}}-\sqrt{j}|^{2}\le |\sqrt{\lambda_{i}}-\sqrt{j}||\sqrt{\lambda_{i}}+\sqrt{j}|=|\lambda_{i}-j|\le \epsilon.  
\]

\end{proof}

\subsection{Proof that the events $A^n$, $D^n$, $F^n$ and $G^n$ all
have high probability}\label{highprob}

\begin{lemma}
\label{Aj}
We have that $A^n$ has probability close to one.
More precisely, assuming $\beta>\alpha$, then for any 
constant $\gamma<\beta-\alpha$, the following holds
\[
P((A^{n})^{c})=o(\exp(-n^\gamma)).
\]
\end{lemma}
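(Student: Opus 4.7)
The plan is to estimate $COV[\vec R(j)\mid X]-jI$ entry-by-entry and then aggregate via the Frobenius/operator-norm comparison and two union bounds. By \eqref{schneesturm}, conditional on $X$ we have $COV[\vec R(j)\mid X]=\sum_{i=1}^{j}\vec X_i\vec X_i^{T}$ with $\vec X_i=(X_i,X_{i-1},\ldots,X_{i-k+1})^{T}$. The diagonal entries equal exactly $j$ because $X_p^{2}=1$, so the random matrix $M:=COV[\vec R(j)\mid X]-jI$ vanishes on the diagonal. For any off-diagonal pair $(l,m)$ with $d:=m-l\neq 0$, re-indexing yields
\[
M_{l,m}=\sum_{p\in I_l}X_p X_{p-d},
\]
where $I_l$ is an interval of $j$ consecutive integers.

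The key structural step is the distributional identity that $M_{l,m}$ is distributed exactly as a sum of $j$ \emph{i.i.d.} Rademacher variables (not merely pairwise uncorrelated ones). I rely on the elementary observation that if $y_1,y_2,\ldots$ are i.i.d.\ Rademacher then the consecutive products $z_i:=y_i y_{i-1}$ are themselves i.i.d.\ Rademacher, because the transformation $(y_1,\ldots,y_n)\mapsto(y_1,z_2,\ldots,z_n)$ is a bijection of $\{-1,+1\}^{n}$ and therefore preserves the uniform law. Splitting the sum defining $M_{l,m}$ by the residue class of $p$ modulo $|d|$, different classes involve disjoint $X$-subsequences (hence are independent), and within each class the summands are consecutive products $y_i y_{i-1}$ of an i.i.d.\ Rademacher subsequence. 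Reassembling the classes realises $M_{l,m}$ as a sum of exactly $j$ i.i.d.\ Rademacher signs.

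Hoeffding's inequality then yields $P(|M_{l,m}|\ge t\sqrt{j})\le 2e^{-t^{2}/2}$ for every off-diagonal pair and every $t>0$. Since $\|M\|_{op}\le\|M\|_F\le(k+1)\max_{l\neq m}|M_{l,m}|$, the event $A^n_i$ is implied by all off-diagonal entries of its covariance matrix being at most $C\sqrt{j}$. Union-bounding over the $(k+1)^{2}$ off-diagonal entries in each covariance matrix and over the $n/j$ time intervals produces
\[
P\bigl((A^n)^{c}\bigr)\le 2\,\frac{n}{j}(k+1)^{2}\,e^{-C^{2}/2}=O\!\left(n^{1+2\alpha-\beta}\right)e^{-C^{2}/2}.
\]
Choosing the threshold $C=C(n)$ to grow like $n^{\gamma'/2}$ for some $\gamma'$ with $\gamma<\gamma'<\beta-\alpha$ makes the right-hand side $o(\exp(-n^{\gamma}))$, as claimed; the upper bound $\gamma'<\beta-\alpha$ keeps $C k\sqrt{j}$ sufficiently small so that the coupling argument of the preceding section is not invalidated.

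\textbf{Main obstacle.} The delicate point is the distributional identification in the second paragraph. Consecutive terms in $M_{l,m}$ share one of the two factors, so they are not independent but only uncorrelated; a Chebyshev bound on that basis gives only polynomial decay, which cannot survive the union bound of size $O(n^{1+2\alpha-\beta})$ and still produce stretched-exponential decay. The Rademacher-product bijection $(y_1,\ldots,y_n)\mapsto(y_1,z_2,\ldots,z_n)$, combined with the residue-class decomposition that separates the short-range dependencies, is precisely what upgrades Chebyshev to a genuine Hoeffding sub-Gaussian bound. Once this identity is in hand, the Frobenius-operator comparison, the Hoeffding step, and the two union bounds are routine.
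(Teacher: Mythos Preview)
Your entrywise approach is a genuinely different route from the paper's, and the distributional identity for $M_{l,m}$ via the Rademacher-product bijection is correct and nice. But the proof does not establish the lemma as stated. The event $A^n$ is defined with a \emph{fixed} constant $C$: one requires $\|COV[\vec R((i+1)j)-\vec R(ij)]-jI\|\le Ck\sqrt{j}$. Your Frobenius reduction forces $\max_{l\ne m}|M_{l,m}|\le C\sqrt{j}$, and Hoeffding on a sum of $j$ i.i.d.\ Rademachers then gives $P(|M_{l,m}|>C\sqrt{j})\le 2e^{-C^{2}/2}$. For fixed $C$ this is a \emph{constant}: indeed $M_{l,m}/\sqrt{j}$ is asymptotically standard normal, so $P(|M_{l,m}|>C\sqrt{j})\to 2\Phi(-C)>0$. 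After the union bound over $(k+1)^{2}\cdot n/j$ entries your right-hand side is not even $o(1)$, let alone $o(\exp(-n^{\gamma}))$. Allowing $C=C(n)\sim n^{\gamma'/2}$ changes the event being estimated, so you are no longer proving the stated lemma; and your claim that $\gamma'<\beta-\alpha$ keeps the downstream coupling intact is also too loose (already $Ck\sqrt{j}\ll j$ requires $\gamma'<\beta-2\alpha$, and the propagation through $D^{n}$ and $\delta_{n}$ tightens this further).

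The paper bypasses the entrywise loss by working directly in operator norm. It thins $\sum_{i=1}^{j}\vec X_{i}\vec X_{i}^{T}$ into $k+1$ sub-sums spaced $k+1$ apart, so that each sub-sum is a sum of $j/(k+1)$ \emph{independent} rank-one projections, and then applies a matrix-concentration inequality (Vershynin): for $N$ i.i.d.\ isotropic sub-Gaussian vectors in $\mathbb R^{d}$ the empirical covariance deviates in operator norm by $O(\sqrt{dN})$ with failure probability $e^{-cN}$. With $N=j/(k+1)$ this yields failure probability $e^{-j/(k+1)}=e^{-n^{\beta-\alpha}}$ per sub-sum; summing the $k+1$ sub-sums produces the threshold $Ck\sqrt{j}$, and the final union bound over $n/j$ blocks gives $P((A^{n})^{c})\le \frac{n(k+1)}{j}e^{-j/(k+1)}=o(e^{-n^{\gamma}})$ for every $\gamma<\beta-\alpha$. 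The essential difference is that matrix concentration captures cancellations among the $(k+1)^{2}$ correlated entries that the crude Frobenius comparison throws away: the operator norm is of order $k\sqrt{j}$ with stretched-exponentially small failure probability even though individual entries exceed $C\sqrt{j}$ with constant probability.
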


\begin{proof}
Consider the sum on the right side of \eqref{schneesturm}.  Instead of taking the full sum, we are going to split the sum in the following form 
\[
\begin{split}
&\sum_{i=1}^{j}\left(\begin{array}{c}
x_{i}\\
x_{i-1}\\
x_{i-2}\\
\ldots \\
x_{i-k+1}
\end{array}
\right)\cdot (x_{i},x_{i-1},x_{i-k+1},\ldots,x_{i(k+1)-k})\\ 
&=\sum_{l=1}^{k}\sum_{i=1}^{j/k}\left(\begin{array}{c}
x_{i(k+1)+l}\\
x_{i(k+1)+l-1}\\
x_{i(k+1)+l-2}\\
\ldots \\ 
x_{i(k+1) +l-k+1}
\end{array}
\right)\cdot (x_{i(k+1)+l},x_{i(k+1)+l-1},x_{i(k+1)+ l-2},\ldots,x_{i(k+1)+ l-k+1})
\end{split}
\]
where for simplicity we assume that $j$ is a lot larger than $k$ and that $j$ is a multiple of $k$.

Since for each $1\le l\le k+1$, the sequence 
\[
\vec{X}_{i,l}=(X_{i(k+1)+l},X_{i(k+1)+l-1},\ldots,X_{i(k+1)+l-k+1}),
\]
are iid when $i$ runs from $1$ to $j/k$.   The covariance matrix of these vectors is the $k+1$-dimensional identity matrix $I$.  So,
\begin{equation}
\label{sumo}\sum_{i=1}^{j/(k+1)}\frac{\vec{X}_{i,l}\otimes \vec{X}_{i,l}}{j/(k+1)}
\end{equation}
is an estimate of the covariance matrix of $\vec{X}_{1,1}$. Hence,
expression \ref{sumo} is approximately equal to $I$ and
\begin{equation}
\label{thinnedsum}\sum_{i=1}^{j/k}\left(\begin{array}{c}
x_{i(k+1)+l}\\
x_{i(k+1)+l-1}\\
x_{i(k+1)+l-2}\\
\ldots \\ 
x_{i(k+1) +l-k+1}
\end{array}
\right)\cdot (x_{i(k+1)+l},x_{i(k+1)+l-1},x_{i(k+1)+ l-2},\ldots,x_{i(k+1)+ l-k+1})\approx \frac{j}{k+1} I
\end{equation}

Now, according to \cite{Vershinin2011}
the precision of such an approximation in operator norm
is the square root of the dimension times  the square root of the sample
size. More precisely,  we have
\[
\begin{split}
&P\left(
\left|\sum_{i=1}^{j/k}\left(\begin{array}{c}
x_{i(k+1)+l}\\
x_{i(k+1)+l-1}\\
x_{i(k+1)+l-2}\\
\ldots \\ 
x_{i(k+1) +l-k+1}
\end{array}
\right)\cdot (x_{i(k+1)+l},x_{i(k+1)+l-1},x_{i(k+1)+ l-2},\ldots,x_{i(k+1)+ l-k+1})-\frac{j}{k+1}I\right|\geq C\sqrt{k}\sqrt{\frac{j}{k+1}}
\right) \\
&\leq e^{-j/(k+1)}.
\end{split}
\]
Here, $C$ is  a constant which does not depend on $j$.
Hence, for the full sum, this means 
\[
P\left(|COV[\vec{R}_j|X=x]-jI|\geq Ck\sqrt{j}\right)
\leq (k+1)e^{-j/k+1},$$
So, we have proven that
$$P((A^{n}_{l})^{c})\leq (k+1)\exp^{-j/k+1}.
\]
Hence, since $P((A^{n}_i)^{c})=P((A^{n}_1)^{c})$ and since
$$A^n=\cup_{i=1}^{n/j}A^n_i,$$
we have
\begin{equation}
\label{merde}
P((A^{n})^{c})\leq \sum_{i=1}^{n/j}P((A^{n}_i)^{c})\leq \sum_{i=1}^{n/j}(k+1)e^{-j/(k+1)}=
\frac{n(k+1)}{j}e^{-j/(k+1)}
.\end{equation}
Since, $j=n^\beta$ and $k=n^\alpha$ with $\alpha>\beta$ it follows
that the dominant term in the bound on the right side of \ref{merde}
is $e^{-j/(k+1)}$ which is an exponential decaying function in  a fractional
power of $n$.

\end{proof}

The next lemma shows that when the event $A^n$ holds,
then $D^n$ has high probability
\begin{lemma}
We have that 
\[
P((D^{n})^{c}|A^n)\leq e^{-\log^{2}(n)/2}.
\]
In particular, $P((D^{n})^{c}|A^n)\xrightarrow[n\to\infty]{}0$.  
\end{lemma}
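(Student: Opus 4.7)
The plan is to work conditionally on $X=x$ with $x$ such that $A^n$ holds, and use the fact that under the construction, the differences $\vec{C}(ij)-\vec{R}(ij)$ are sums of independent normal vectors with small covariance, then apply a Gaussian tail bound and a union bound.

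First, I would unpack the construction of $\vec{C}$. Conditional on $X=x$, the increments $\Delta_i := \vec{R}((i+1)j)-\vec{R}(ij)$ are independent centered Gaussian vectors, and by hypothesis on $A^n$ we have $|COV(\Delta_i)-jI|\le Ck\sqrt{j}$ for every $i=0,1,\dots,n/j-1$. Lemma \ref{coupling} (applied at each step with $\epsilon=Ck\sqrt{j}$) then furnishes independent Gaussian increments $\widetilde{\Delta}_i := \vec{C}((i+1)j)-\vec{C}(ij)$ with $COV(\widetilde{\Delta}_i)=jI$ and, crucially,
\[
|COV(\widetilde{\Delta}_i-\Delta_i)|\le Ck\sqrt{j}.
\]
In particular every component of $\widetilde{\Delta}_i-\Delta_i$ is a centered normal with variance at most $Ck\sqrt{j}$, and these differences are independent across $i$ (since the couplings are done independently).

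Second, I would telescope. For each component $l$ and each $i\le n/j$,
\[
C^l(ij)-R^l(ij)=\sum_{m=0}^{i-1}\left((\widetilde{\Delta}_m)_l-(\Delta_m)_l\right),
\]
which is a sum of at most $n/j$ independent centered normals of variance at most $Ck\sqrt{j}$. Consequently $C^l(ij)-R^l(ij)$ is centered normal with variance at most
\[
\sigma_n^2:=\frac{n}{j}\cdot Ck\sqrt{j}=\frac{Ckn}{\sqrt{j}}.
\]
The threshold in the definition of $D^n$ is $t_n=\log(n)\sqrt{k}\sqrt{n}/j^{1/4}$, which satisfies $t_n^2/(2\sigma_n^2)=\log^2(n)/(2C)$. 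The standard Gaussian tail then yields
\[
P\bigl(|C^l(ij)-R^l(ij)|\ge t_n \,\big|\, X=x\bigr)\le 2\exp\!\left(-\frac{\log^2(n)}{2C}\right).
\]

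Finally, I would take a union bound over $i=1,\dots,n/j$ and $l=1,\dots,k+1$, getting a prefactor of order $n^{1+\alpha-\beta}$, which is polynomial in $n$ and therefore easily absorbed by the super-polynomial factor $\exp(-\log^2(n)/(2C))$ for large $n$; after adjusting the constant $C$ in the definition of $A^n$ (or by noting that $\log^2(n)/(2C)-O(\log n)\ge \log^2(n)/2$ for $n$ large), one obtains $P((D^n)^c\mid X=x)\le e^{-\log^2(n)/2}$. Averaging over $x$ on the event $A^n$ preserves the bound, yielding $P((D^n)^c\mid A^n)\le e^{-\log^2(n)/2}$ as required.

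The only delicate point is the bookkeeping of the constant $C$: one must make sure the polynomial prefactor from the union bound does not eat into the exponent. This is routine because $\log^2(n)$ dominates $\log n$, but it is worth stating explicitly that the constant in $A^n$ can be chosen freely (only Lemma \ref{Aj} depends on it, via an exponential of a fractional power of $n$ which is much smaller than $e^{-\log^2(n)/2}$), so the inequality $t_n^2/(2\sigma_n^2)\ge \log^2(n)/2$ can be arranged without affecting the other parts of the argument.
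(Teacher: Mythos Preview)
Your proposal is correct and follows essentially the same route as the paper: work conditionally on $X=x$ with $A^n$ holding, use Lemma~\ref{coupling} to bound each component of $(\vec{C}-\vec{R})((i+1)j)-(\vec{C}-\vec{R})(ij)$ by a centered normal with variance at most $Ck\sqrt{j}$, telescope, and apply a Gaussian tail bound. The only methodological difference is how the maximum over $i$ is handled: the paper embeds the discrete-time process $i\mapsto C^l(ij)-R^l(ij)$ into a one-dimensional Brownian motion evaluated at the cumulative-variance times and invokes the reflection principle, thereby avoiding a union bound over $i$; you instead union over both $i$ and $l$, picking up an extra polynomial factor $n/j$. Since either polynomial prefactor is swallowed by $\exp(-c\log^2 n)$, both arguments yield the stated bound (and both are equally informal about the constant $C$, which the paper also glosses over).
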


\begin{proof}
We work conditional on $X=x$.
We have that when the event $A^n$ holds,
then
\[
|COV(\vec{R}((i+1)j)-\vec{R}(ij))-jI|\leq C k\sqrt{j}
\]
where $I$ is a $k+1$ dimensional identity matrix.
So, our coupling of $\vec{R}((i+1)j)-\vec{R}(ij)$
to $\vec{C}((i+1)j)-\vec{C}(ij))$ is such that
the covariance matrix of the difference has norm at most
$Ck \sqrt{j}$. Thus, conditional on $X=x$,
and assuming $A^n$ holds, we get
\begin{equation}
\label{differences}
|COV[(\vec{R}((i+1)j)-\vec{R}(ij))- (\vec{C}((i+1)j)-\vec{C}(ij)))]
|\leq Ck \sqrt{j},
\end{equation}
for all $i=1,2,\ldots,\frac{n}{j}$.
This implies that the $l$-th component of the difference
\[
(\vec{R}((i+1)j)-\vec{R}(ij))- (\vec{C}((i+1)j)-\vec{C}(ij)))
\]
has a variance less or equal to $Ck \sqrt{j}$. But conditional
on $X=x$, we have that the differences \eqref{differences}
are normal, independent with expectation $\vec{0}$.
Taking the $l$-th component of 
\[
\vec{R}(t)-\vec{C}(t)
\]
at the times $j,2j,3j,\dots,n$
is thus like taking a standard Brownian motion   at the times
$t_1<t_2<\ldots <t_n$ where
\begin{align*}
&t_1=VAR[\vec{R}(j)-\vec{C}(j)],\\
&t_2=VAR[\vec{R}(2j)-\vec{C}(2j)]\\
&\ldots
\end{align*}
Since the variance increases at each step by at most $C k\sqrt{j}$,
we get that 
\[
t_{\frac{n}{j}}\leq Ck \sqrt{j}\cdot \frac{n}{j}=
Ck \cdot \frac{n}{\sqrt{j}}
\]
So, the maximum of the $l$-th component of the difference
$\vec{R}(t)-\vec{C}(t)$ restricted to the times $t=j,2j,3j,\ldots,\frac{n}{j}$
is thus stochastically bounded 
by the maximum of a standard Brownian motion during
the time interval 
\[
\left[0,Ck \cdot \left[\frac{n}{\sqrt{j}}\right]\right].
\]
The standard Brownian motion starting at the origin.
By the reflection principle for Brownian motion,
this distribution function can be bounded by
the distribution function of 
the standard Brownian motion at time
\[
T=Ck \cdot \left[\frac{n}{\sqrt{j}}\right].
\]
We have just shown that
conditional on $X=x$ and when $A^n$ holds,
then
\[
P(\max_{t=j,2j,3j,\dots,n}|R^l(t)-C^l(t)|\geq \log^2(n)\sqrt{T})
\leq 2P(\mathcal{N}(0,1)\geq \log^2(n))\leq  e^{-\log^{2}(n)/2},
\]
where the last inequality was obtained by using
the lemma \ref{boundstnormal} below.
We have just proven that
$$P(D^n |A^n)\geq 1-  e^{-\log^{2}(n)/2}.$$
\end{proof}

\begin{lemma}
We have that $P((F^{n})^{c})\leq e^{-\log^{2}(n)/3}$
for all $n$ large enough.
\end{lemma}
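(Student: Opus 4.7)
The plan is to combine the reflection principle applied coordinate-by-coordinate with a union bound over the $(k+1)(n/j)$ choices of coordinate and subinterval, and then absorb the resulting polynomial prefactor into the Gaussian tail. By the construction described before Lemma~\ref{coupling} and the remarks immediately preceding the event definitions, $\vec{C}$ is (unconditionally) a $(k+1)$-dimensional standard Brownian motion, so its coordinates $C^{1},\dots,C^{k+1}$ are independent one-dimensional standard Brownian motions, and by stationarity of increments, for each $i$ and $l$ the shifted process $s\mapsto C^{l}(ij+s)-C^{l}(ij)$ on $s\in[0,j]$ is itself a standard Brownian motion started at the origin.

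The key step is the reflection principle together with the Gaussian Chernoff bound, which yields, for each fixed $i$ and $l$,
\[
P\!\left(\sup_{s\in[0,j]}|C^{l}(ij+s)-C^{l}(ij)|\ge \log(n)\sqrt{j}\right)\le 4\,P\!\left(\mathcal{N}(0,1)\ge \log(n)\right)\le 4\exp(-\log^{2}(n)/2).
\]
A union bound over the $k+1$ components and the $n/j$ subintervals $[ij,(i+1)j]\subset[0,n]$ then gives
\[
P((F^{n})^{c})\le 4(k+1)\cdot\frac{n}{j}\cdot\exp(-\log^{2}(n)/2).
\]

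Finally, I absorb the polynomial prefactor into the exponent. With $k=n^{\alpha}$ and $j=n^{\beta}$ for constants $\alpha,\beta\in(0,1)$, one has $\log\bigl(4(k+1)n/j\bigr)=O(\log n)$, which is of strictly smaller order than $\log^{2}(n)/2$. Hence for all $n$ large enough, $\log\bigl(4(k+1)n/j\bigr)\le \log^{2}(n)/6$, and therefore
\[
P((F^{n})^{c})\le \exp\!\left(\tfrac{1}{6}\log^{2}(n)-\tfrac{1}{2}\log^{2}(n)\right)=\exp(-\log^{2}(n)/3),
\]
which is the claimed bound. There is no substantive obstacle here beyond book-keeping; the only tuning needed is that the threshold $\log(n)\sqrt{j}$ has been chosen precisely so that the single-interval Gaussian tail $\exp(-\log^{2}(n)/2)$ beats the polynomial number $(k+1)n/j$ of events in the union bound, with room to spare.
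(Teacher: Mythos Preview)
Your proof is correct and follows essentially the same route as the paper's: reflection principle on each coordinate and subinterval, a Gaussian tail bound yielding $\exp(-\log^{2}(n)/2)$, and then a union bound over the $(k+1)(n/j)$ events whose polynomial prefactor is absorbed into the exponent. If anything you are slightly more careful with constants (the factor $4$ for the two-sided supremum) and more explicit about the final absorption step than the paper itself.
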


\begin{proof}
Let $F^n_{li}$ be the event that:
$$\sup_{s\in[0,j]}|C^l(ij+s)-C^l(ij)|\leq \log(n)\sqrt{j}$$
So, we have
\begin{equation}
\label{FNC}
P(F^{nc}_{li})=
P(\max_{t\in[0,j]}B_t\geq \log(n)\sqrt{j}
\end{equation}
where $B_t$ designates a standard Brownian motion starting
at the origin in a one dimensional space. By the reflection
principle, we get that the right side of \ref{FNC}
is equal to
\begin{equation}
\label{groslolo}P(F^{nc}_{li})=P(B_j\geq \log(n)\sqrt{j})=
P(\mathcal{N}(0,1)\cdot\sqrt{j}\geq \log(n)\sqrt{j})=
P(\mathcal{N}(0,1)\geq \log(n))
\end{equation}
where $\mathcal{N}(0,1)$ designates a standard normal.
Using Lemma \ref{boundstnormal}, we find
that the right side of \ref{groslolo} can be bounded
by $e^{-\log^{2}(n)/2}/2$ so that
\[
P((F^{n}_{li})^{c})\leq e^{-\log^{2}(n)/2}/2.
\]
Now
$$F^n=\cap_{li}F^n_{li}$$
where in the last interesection above $l$ ranges over $1,2,\ldots,k+1$
and $i$ ranges over $1,2,\ldots,n/j$. Hence,
we get
\[
P((F^{n})^{c})\leq \sum_{li}P(F^{nc}_{li})\leq 
\sum_{li}e^{-\log^{2}(n)/2}=\frac{n(k+1)}{j}e^{-\log^{2}(n)/2}
\]
where the dominant term is $e^{-\log^{2}(n)/2}$ which beats any power
of $n$. Note that $\frac{n(k+1)}{j}$ is just a power of $n$.
\end{proof}

\begin{lemma}
We have that $P((G^{n})^{c})\leq e^{-\log^{2}(n)/3}$
for all $n$ large enough.
\end{lemma}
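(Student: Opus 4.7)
The plan is to run essentially the same argument that handled $F^n$, the only new ingredient being that, with the scoring function $S(x,y)=xy$ and $Y_i\sim\mathcal{N}(0,1)$, each individual walk $R^l$ has, at integer times, the law of a sum of i.i.d.\ standard normals, so that a reflection-principle bound applies to $\vec R$ just as directly as it did to $\vec C$.

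First I would fix $l\in\{1,\dots,k+1\}$ and observe that the increment $R^l(m)-R^l(m-1)=X_{m-l+1}Y_m$ is, conditionally on $X$, the product of a deterministic $\pm1$ with an independent $\mathcal{N}(0,1)$, hence itself $\mathcal{N}(0,1)$; for fixed $l$ distinct increments involve distinct $Y$'s and are therefore independent. Consequently the restriction of $R^l$ to the integer times in a window $[ij,(i+1)j]$ has the same law as a standard Brownian motion sampled at $0,1,\dots,j$. Because $R^l$ was defined by linear interpolation, the supremum of $|R^l(ij+s)-R^l(ij)|$ over $s\in[0,j]$ is attained at an integer breakpoint, and is therefore stochastically dominated by $\max_{t\in[0,j]}|B_t|$ for a standard Brownian motion $B$.

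From here the remainder copies the $F^n$ proof verbatim. The reflection principle gives $P(\max_{t\in[0,j]}|B_t|>\log(n)\sqrt{j})\le 4\,P(\mathcal{N}(0,1)>\log n)$, and Lemma~\ref{boundstnormal} bounds this by $2\,e^{-\log^{2}(n)/2}$, so each of the $(k+1)(n/j)$ events $G^n_{l,i}$ has complement of probability at most $2e^{-\log^{2}(n)/2}$. A union bound produces
\[
P((G^{n})^c)\le \frac{2(k+1)n}{j}\,e^{-\log^{2}(n)/2},
\]
and since $(k+1)n/j$ is only a polynomial in $n$ it is absorbed when we weaken the exponent from $\tfrac12$ to $\tfrac13$, yielding the claimed bound $P((G^n)^c)\le e^{-\log^{2}(n)/3}$ for all $n$ large enough.

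The hard part, if any, is merely the distributional identification in the first paragraph; once it is in hand the rest is essentially a transcription of the $F^n$ proof. If instead the $Y_i$'s were binary, one would still get an analogous bound by replacing the reflection-principle step with a maximal/Hoeffding-type inequality for sums of bounded i.i.d.\ variables, but the normal assumption is what lets us use exactly the same Brownian-motion machinery for $\vec R$ as was used for $\vec C$, so no genuinely new estimate is required.
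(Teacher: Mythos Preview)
Your proposal is correct and is precisely the argument the paper intends: the paper's own proof reads ``Same as for $F^n$. So we leave it to the reader,'' and you have supplied exactly the missing observation---that with $S(x,y)=xy$, Rademacher $X_i$'s, and standard normal $Y_i$'s, each $R^l$ restricted to integer times is a simple random walk with i.i.d.\ $\mathcal{N}(0,1)$ steps, so the reflection-principle estimate from the $F^n$ proof carries over verbatim. Nothing further is needed.
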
     
\begin{proof}Same as for $F^n$. So we leave it to the reader.

\end{proof}

\begin{lemma}
\label{boundstnormal}Let $\mathcal{N}(0,1)$ be a standard normal.
Let $s>0$.
Then, we have
$$P(\mathcal{N}(0,1)\geq s)\leq \frac{1}{2}\cdot\exp(-s^2/2).$$
\end{lemma}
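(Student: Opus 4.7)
The plan is to reduce the Gaussian tail to the ``half-space'' probability $P(\mathcal{N}(0,1)\geq 0)=1/2$ by a simple change of variables. Specifically, I would write
\[
P(\mathcal{N}(0,1)\geq s)=\int_{s}^{\infty}\frac{1}{\sqrt{2\pi}}e^{-x^{2}/2}\,dx,
\]
and then substitute $x=s+u$ with $u\in[0,\infty)$. Expanding $(s+u)^{2}=s^{2}+2su+u^{2}$ factors the integrand as $e^{-s^{2}/2}\cdot e^{-su}\cdot e^{-u^{2}/2}$, allowing the desired $e^{-s^{2}/2}$ to be pulled outside the integral.

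Next, since $s>0$ and the remaining integration is over $u\geq 0$, we have $e^{-su}\leq 1$. Bounding the integrand above by $\frac{1}{\sqrt{2\pi}}e^{-u^{2}/2}$ and using $\int_{0}^{\infty}\frac{1}{\sqrt{2\pi}}e^{-u^{2}/2}\,du=\tfrac12$, we arrive at
\[
P(\mathcal{N}(0,1)\geq s)\;\leq\;e^{-s^{2}/2}\int_{0}^{\infty}\frac{1}{\sqrt{2\pi}}e^{-u^{2}/2}\,du=\tfrac{1}{2}e^{-s^{2}/2},
\]
which is exactly the claim. No delicate estimates are involved, and the bound is sharp at $s=0$ (both sides equal $\tfrac12$), which is why the extra factor $\tfrac12$ (compared to the cruder Chernoff bound $e^{-s^{2}/2}$) is available essentially for free.

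There is no serious obstacle here; the only thing to watch is the sign of $s$ (the argument uses $s\geq 0$ crucially to bound $e^{-su}\leq 1$). As an alternative, one could run a monotonicity argument on $g(s):=e^{s^{2}/2}P(\mathcal{N}(0,1)\geq s)$, checking that $g(0)=\tfrac12$ and $g'(s)\leq 0$ via the elementary inequality $sP(\mathcal{N}(0,1)\geq s)\leq \phi(s)$ for $s\geq 0$ (where $\phi$ is the standard normal density), but the translation argument is shorter and more transparent.
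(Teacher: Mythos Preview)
Your proof is correct. The paper actually states this lemma without proof, so there is no approach to compare against; your translation argument $x=s+u$ is a clean and standard way to obtain the bound, and it is entirely appropriate here.
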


\section{Simulations}\label{s:6}

In the pictures below we have simulated the optimal alignment with length $n$ running from $1$ to $10000$ and each time we compute the estimated standard deviation (of 100 simulations from a $0,1$ sequence).  The gaps are allowed only in one sequence and the number of gaps is restricted to be either a fixed number, a number $k=n^{\alpha}$ or a proportional number to $n$. 
\begin{figure}
 \centering
    \includegraphics[scale=.6]{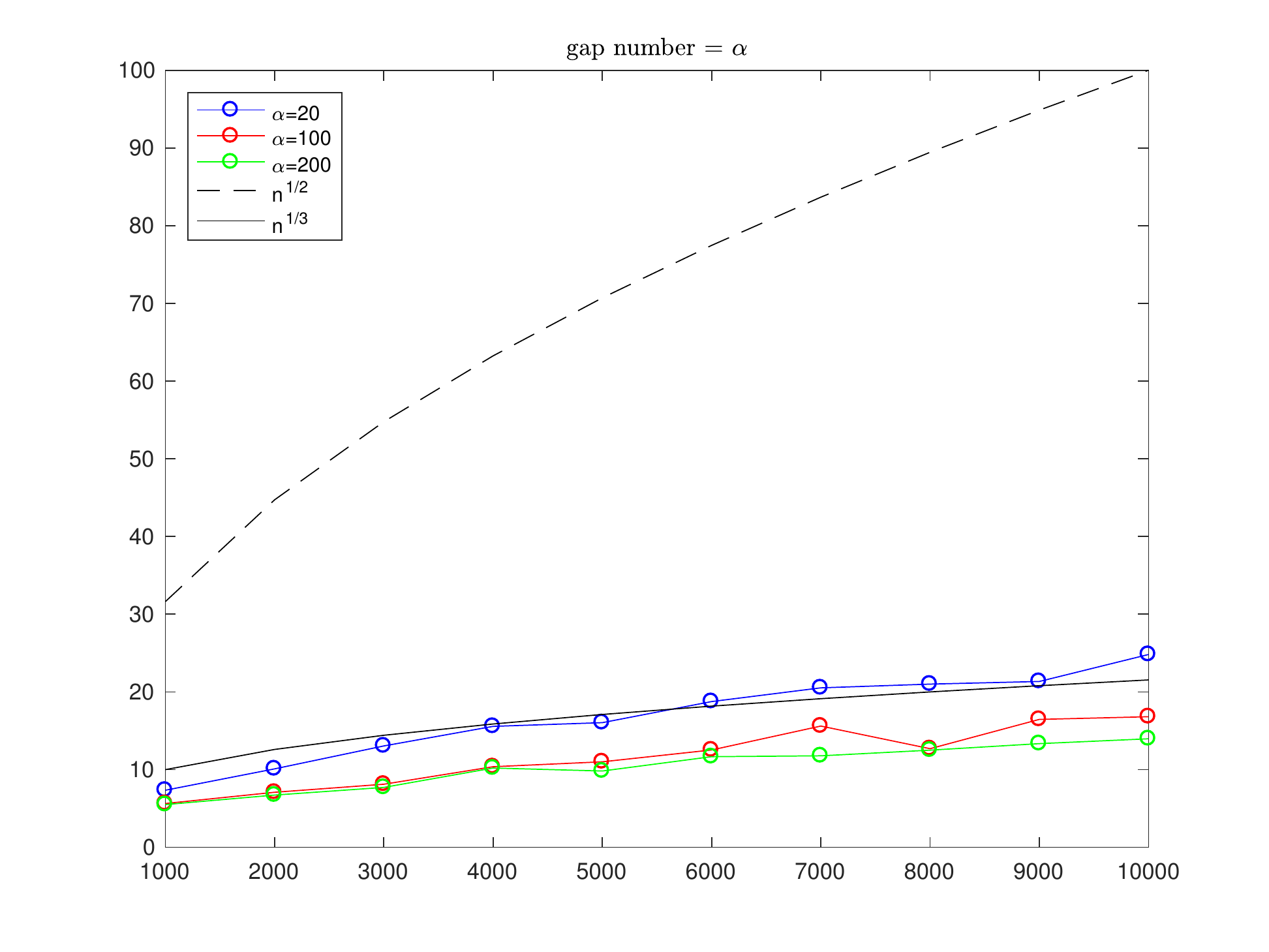} 
 \caption{ The horizontal line is the length of the sequences and the vertical line represents the standard deviation size of the optimal alignment.   The number of gaps in this simulation is 20, 100 and 200.  The dashed line is the plot of $n^{1/2}$ line and the solid black curve is $n^{1/3}$.   As it is clear from this picture, the standard deviation of the optimal alignment is suggested to be of size $n^{1/3}$  and not $n^{1/2}$.  }
\end{figure}

\begin{figure}[H]
   \centering
    \includegraphics[scale=.6]{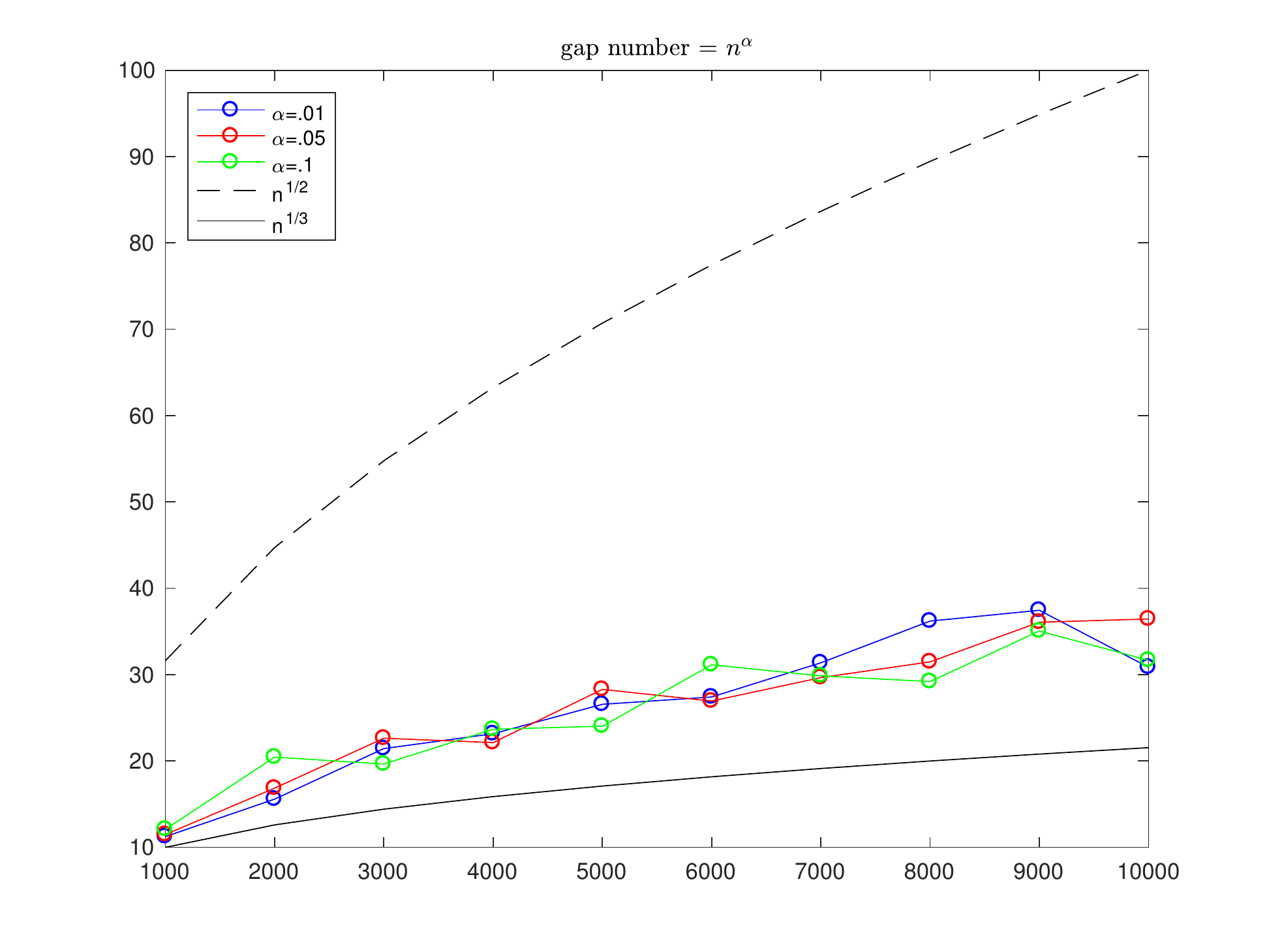} 
   \caption{ The number of gaps in this simulation is $n^{\alpha}$ for $\alpha=.1,.05,.01$.  As it is clear from this picture, the standard deviation of the optimal alignment is suggested to be of size $n^{1/3}$  and not $n^{1/2}$.  }
\end{figure}

\begin{figure}[H]
   \centering
    \includegraphics[scale=.6]{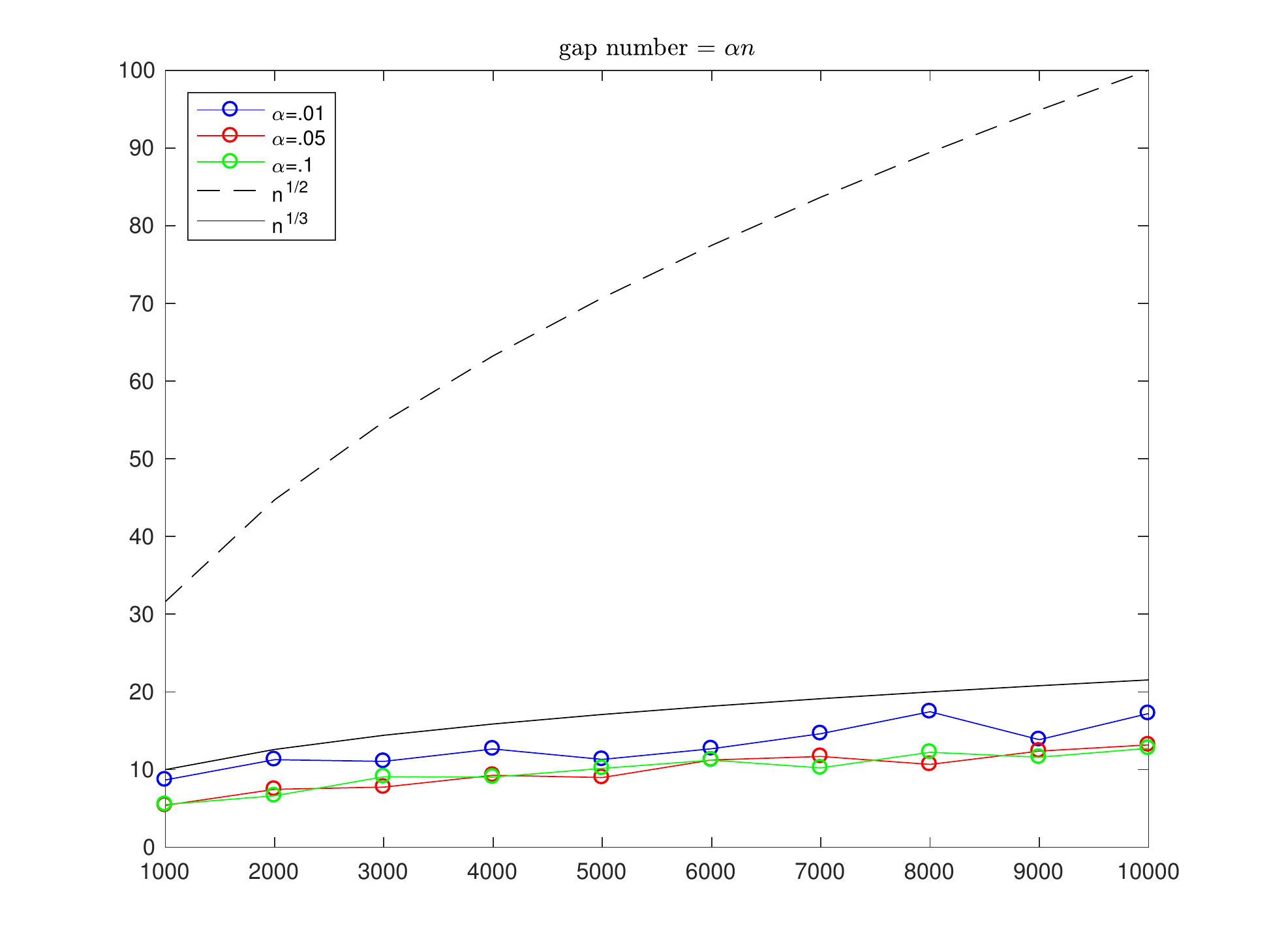} 
   \caption{ The number of gaps in this simulation is $\alpha n$ for $\alpha=.1,.05,.01$.     As it is clear from this picture, the standard deviation of the optimal alignment is suggested to be of size $n^{1/3}$  and not $n^{1/2}$.  }
\end{figure}

\section{Acknowledgements} 

We want to thank the reviewers of this paper for the comments which improved the paper.

\end{document}